\theoremstyle{plain}
\newtheorem{theorem}{Theorem}
\newtheorem*{theorem*}{Theorem}
\newtheorem{proposition}{Proposition}
\newtheorem{lemma}{Lemma}
\newtheorem{definition}{Definition}
\theoremstyle{remark}
\newtheorem{problem}{Problem}
\numberwithin{equation}{section}
\begin{document}
\title[H.~Bohr's theorem]{H.~Bohr's theorem for bounded symmetric domains}
\author{Guy Roos}
\address{Nevski pr. 113/4-53, 191024 St Petersburg, Russian Federation}
\email{guy.roos@normalesup.org}
\date{April 9, 2009}
\subjclass[2000]{Primary 32A05. Secondary 30B10, 32M15, 17C40}

\begin{abstract}
A theorem of H.~Bohr (1914) states that if $f(z)=\sum_{k=0}^{\infty}a_{k}%
z^{k}$ is a holomorphic map from the unit disc $D\subset\mathbb{C}$ into
itself, then $\sum_{k=0}^{\infty}\left\vert a_{k}z^{k}\right\vert \leq1$ for
$\left\vert z\right\vert \leq\frac{1}{3}$; the value $\frac{1}{3}$ is optimal.
This result has been extended by Liu Taishun and Wang Jianfei (2007) to the
bounded symmetric domains of the four classical series, and to polydiscs. The
result of Liu and Wang may be generalized to all bounded symmetric domains,
with a proof which does not depend on classification.

\end{abstract}
\maketitle

\section*{Introduction}%

The following theorem was proved by Harald Bohr \cite{Bohr1914} for
$\left\vert z\right\vert <\frac{1}{6}$, then soon improved to $\left\vert
z\right\vert <\frac{1}{3}$ from remarks by M.~Riesz, I.~Schur,
F.~Wiener\footnote{See \cite{BoasKhavinson2000} for biographical elements
about Friedrich Wiener, not to be confused with Norbert Wiener.}:

\begin{theorem}
\label{HBohrTheorem}Let $f:\Delta\rightarrow\Delta$ be a holomorphic function
from the unit disc $\Delta\subset\mathbb{C}$ into itself. If $f(z)=\sum
_{k=0}^{\infty}a_{k}z^{k}$, then
\begin{equation}
\sum_{k=0}^{\infty}\left\vert a_{k}z^{k}\right\vert <1 \label{eq0}%
\end{equation}
for $\left\vert z\right\vert <\frac{1}{3}$. The value $\frac{1}{3}$ is optimal.
\end{theorem}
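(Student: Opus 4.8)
The plan is to reduce the estimate \eqref{eq0} to a sharp bound on the Taylor coefficients and then sum a geometric series. Writing $s=|a_0|=|f(0)|$, note that since $f$ maps $\Delta$ into $\Delta$ we have $s<1$. The heart of the matter is the classical coefficient inequality
\begin{equation*}
|a_k|\le 1-|a_0|^2=1-s^2\qquad (k\ge 1),
\end{equation*}
which I would establish first, and which I expect to be the main obstacle.

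To prove this coefficient bound I would fix $k\ge 1$, set $\omega=e^{2\pi i/k}$, and form the symmetrized average $F(z)=\frac1k\sum_{j=0}^{k-1}f(\omega^{j}z)$. Because $\Delta$ is convex, $F$ again maps $\Delta$ into $\Delta$; and since $F$ is invariant under $z\mapsto\omega z$, only the coefficients $a_0,a_k,a_{2k},\dots$ survive, so $F(z)=\tilde f(z^{k})$ for a holomorphic $\tilde f:\Delta\to\Delta$ with $\tilde f(0)=a_0$ and $\tilde f'(0)=a_k$. Applying the Schwarz--Pick lemma to $\tilde f$ at the origin yields $|\tilde f'(0)|\le 1-|\tilde f(0)|^2$, which is exactly $|a_k|\le 1-s^2$.

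Granting the coefficient bound, I would estimate, for $r=|z|<\tfrac13$,
\begin{equation*}
\sum_{k=0}^{\infty}|a_k|\,r^{k}\le s+(1-s^2)\sum_{k=1}^{\infty}r^{k}
=s+(1-s^2)\frac{r}{1-r},
\end{equation*}
and then verify by elementary algebra that the right-hand side is $<1$. Factoring out $1-s>0$, the inequality reduces to $(1+s)\,r<1-r$, i.e.\ $r<\frac{1}{2+s}$; since $s<1$ one has $\frac{1}{2+s}>\tfrac13$, so the bound holds whenever $r\le\tfrac13$, which proves \eqref{eq0}.

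For optimality I would test the disc automorphisms $f_a(z)=\frac{a-z}{1-az}$ with $a\in(0,1)$, whose coefficients are $a_0=a$ and $a_k=a^{k-1}(a^2-1)$ for $k\ge 1$. A short computation gives $\sum_{k\ge 0}|a_k|\,r^{k}=a+(1-a^2)\frac{r}{1-ar}$, and the same factoring shows this exceeds $1$ precisely when $r>\frac{1}{1+2a}$. Letting $a\to 1^{-}$, the threshold $\frac{1}{1+2a}$ decreases to $\tfrac13$, so for every $r>\tfrac13$ some $f_a$ violates \eqref{eq0}; hence $\tfrac13$ cannot be enlarged.
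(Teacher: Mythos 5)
Your proposal is correct and follows essentially the same route as the paper: the Wiener-type coefficient bound $|a_k|\le 1-|a_0|^2$ obtained by rotational averaging plus the Schwarz--Pick inequality at the origin, followed by summing the geometric series, with the same extremal family $\frac{a-z}{1-az}$ for optimality. The only cosmetic difference is that the paper proves the case $k=1$ of the coefficient bound by hand (composing with a disc automorphism and applying the Schwarz lemma) where you invoke Schwarz--Pick directly, and it checks the final inequality by substituting $r=\tfrac13$ rather than by your equivalent factoring into $r<\frac{1}{2+s}$.
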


This result has been extended by Liu Taishun and Wang Jianfei
\cite{LiuWang2007} to the bounded symmetric domains of the four classical
series, and to polydiscs, using a case-by-case analysis, as follows:

\begin{theorem}
\label{LiuWang-BohrTheorem}Let $\Omega$ be an irreducible bounded symmetric
domain of classical type in the sense of Hua Luokeng \cite{Hua1963}, or a
polydisc. Denote by $\left\Vert \ \right\Vert _{\Omega}$ the Minkowski norm
associated to $\Omega$. Let $f:\Omega\rightarrow\Omega$ be a holomorphic map
and let
\[
f(z)=\sum_{k=0}^{\infty}f_{k}(z)
\]
be its Taylor expansion in $k$-homogeneous polynomials $f_{k}$. Let $\phi
\in\operatorname{Aut}\Omega$ such that $\phi(f(0))=0$. Then%
\begin{equation}
\sum_{k=0}^{\infty}\frac{\left\Vert D\phi(f(0))\cdot f_{k}(Z)\right\Vert
_{\Omega}}{\left\Vert D\phi(f(0))\right\Vert _{\Omega}}<1\label{eq1}%
\end{equation}
for all $Z$ such that $\left\Vert Z\right\Vert _{\Omega}<\frac{1}{3}$.

For $\left\Vert Z\right\Vert _{\Omega}>\frac{1}{3}$, there exists a
holomorphic map $f:\Omega\rightarrow\Omega$ such that (\ref{eq1}) is not true.
\end{theorem}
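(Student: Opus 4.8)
The plan is to reduce (\ref{eq1}) to the one–dimensional inequality of Theorem \ref{HBohrTheorem}, using only structural features shared by all bounded symmetric domains: the convexity of the Harish–Chandra realization, the transitivity of $\operatorname{Aut}\Omega$, and the fact that the invariant (Carathéodory, equivalently Kobayashi) infinitesimal metric $\kappa_\Omega$ satisfies $\kappa_\Omega(0;v)=\left\Vert v\right\Vert_\Omega$ at the centre. Write $a=f(0)$, $A=D\phi(a)$, $r=\left\Vert Z\right\Vert_\Omega$ and $\widehat Z=Z/r$. Since $\phi(a)=0$, the vectors $A\,f_k(Z)$ appearing in (\ref{eq1}) are the homogeneous components of $A\circ(f-a)$; and since $\phi$ is an automorphism, hence a $\kappa_\Omega$–isometry, one has the identity $\left\Vert Av\right\Vert_\Omega=\kappa_\Omega(a;v)$ for every tangent vector $v$. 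Thus the quantities governing the sum are $\left\Vert A\right\Vert_\Omega=\sup_{\left\Vert v\right\Vert_\Omega=1}\kappa_\Omega(a;v)$, the constant term $\left\Vert Aa\right\Vert_\Omega/\left\Vert A\right\Vert_\Omega=\kappa_\Omega(a;a)/\left\Vert A\right\Vert_\Omega$, and the coefficients $\left\Vert A\,f_k(Z)\right\Vert_\Omega=\kappa_\Omega(a;f_k(\widehat Z))\,r^{k}$.

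The heart of the matter is the estimate $\left\Vert A\,f_k(Z)\right\Vert_\Omega\le\left\Vert Z\right\Vert_\Omega^{\,k}$ for $k\ge1$, which I would prove by the device that yields $\left\vert a_k\right\vert\le1-\left\vert a_0\right\vert^{2}$ in dimension one: averaging over roots of unity. Fix $k\ge1$ and $\omega=e^{2\pi i/k}$, and set $P(\zeta)=\tfrac1k\sum_{j=0}^{k-1}f(\omega^{j}\zeta\widehat Z)$. Only the degrees divisible by $k$ survive the averaging, so $P(\zeta)=\widetilde P(\zeta^{k})$ with $\widetilde P(0)=a$ and $\widetilde P'(0)=f_k(\widehat Z)$; convexity of $\Omega$ makes $P$, hence $\widetilde P$, a holomorphic map $\Delta\to\Omega$. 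The distance–decreasing property of $\kappa_\Omega$ applied to $\widetilde P$ gives $\kappa_\Omega(a;f_k(\widehat Z))\le\kappa_\Delta(0;1)=1$, i.e. $\left\Vert A\,f_k(\widehat Z)\right\Vert_\Omega\le1$, whence $\left\Vert A\,f_k(Z)\right\Vert_\Omega\le r^{k}$. Once the two remaining scalars are identified as $\left\Vert A\right\Vert_\Omega=(1-\left\Vert a\right\Vert_\Omega^{2})^{-1}$ and $c:=\left\Vert Aa\right\Vert_\Omega/\left\Vert A\right\Vert_\Omega=\left\Vert a\right\Vert_\Omega$, summation gives $\sum_k\left\Vert A\,f_k(Z)\right\Vert_\Omega/\left\Vert A\right\Vert_\Omega\le c+(1-c^{2})\,r/(1-r)$, which is $<1$ precisely when $r<1/(2+c)$; as $c<1$, this holds for every $Z$ with $r<\tfrac13$.

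The step I expect to be the real obstacle is the exact evaluation of these two scalars without recourse to Hua's classification, above all the operator–norm identity $\left\Vert D\phi(a)\right\Vert_\Omega=(1-\left\Vert a\right\Vert_\Omega^{2})^{-1}$. The value $c=\left\Vert a\right\Vert_\Omega$ is easy, since $a$ is tangent to the complex geodesic disc through $0$ and $a$, along which $\kappa_\Omega$ restricts isometrically to $\kappa_\Delta$, giving $\kappa_\Omega(a;a)=\left\Vert a\right\Vert_\Omega/(1-\left\Vert a\right\Vert_\Omega^{2})$; the same disc yields $\left\Vert D\phi(a)\right\Vert_\Omega\ge(1-\left\Vert a\right\Vert_\Omega^{2})^{-1}$. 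The reverse inequality — that no tangent direction is stretched more than the geodesic one — is where the symmetric (not merely convex) structure is indispensable. I would obtain it from the Jordan–triple realization: with $a=\sum_i\lambda_i e_i$ the spectral decomposition into orthogonal tripotents, $D\phi(a)$ equals, up to a linear isometry of $\Omega$, the operator $B(a,a)^{-1/2}$ attached to the Bergman operator, which respects the Peirce grading of the frame $\{e_i\}$; a direct computation on the graded pieces shows that its spectral–norm is controlled by the top value $\max_i\lambda_i=\left\Vert a\right\Vert_\Omega$, giving $(1-\left\Vert a\right\Vert_\Omega^{2})^{-1}$. This Jordan–triple computation is exactly what replaces the case–by–case norm calculations of \cite{LiuWang2007}.

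For optimality I would transport the extremal scalar example along a one–dimensional complex geodesic. Choose a minimal tripotent $e$, so that $\left\Vert e\right\Vert_\Omega=1$ and $\zeta\mapsto\zeta e$ is a complex geodesic, together with a norming functional $u$ satisfying $u(e)=1$ and $\left\Vert u\right\Vert=1$, and put $f(W)=\beta\bigl(u(W)\bigr)\,e$ with $\beta(\zeta)=(\alpha-\zeta)/(1-\alpha\zeta)$, $\alpha\in(0,1)$. Then $\left\Vert f(W)\right\Vert_\Omega=\left\vert\beta(u(W))\right\vert<1$, so $f:\Omega\to\Omega$, and taking $Z=\rho e$ one has $\left\Vert A e\right\Vert_\Omega=\left\Vert A\right\Vert_\Omega$, so the whole sum collapses to the scalar one $\sum_k\left\vert\beta_k\right\vert\rho^{k}=\alpha+(1-\alpha^{2})\,\rho/(1-\rho)$. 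For any $\rho>\tfrac13$ this exceeds $1$ as soon as $\alpha$ is taken close enough to $1$, so (\ref{eq1}) fails for that $Z$; hence $\tfrac13$ cannot be improved.
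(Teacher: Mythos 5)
Your overall architecture coincides with the paper's: the root-of-unity averaging that isolates $f_{k}$ and yields $\left\Vert \operatorname{d}\phi(f(0))\cdot f_{k}(z)\right\Vert _{\Omega}\leq\left\Vert z\right\Vert _{\Omega}^{k}$ is the paper's Lemma \ref{L5}; the identity $\left\Vert \operatorname{d}\phi(u)\right\Vert _{\Omega}=(1-\left\Vert u\right\Vert _{\Omega}^{2})^{-1}$ is Proposition \ref{L3}; the geometric-series summation and the M\"obius-type extremal map along a tripotent direction are Propositions \ref{P1} and \ref{P2}. Your repackaging of the easy halves through the Carath\'eodory--Kobayashi metric is a clean alternative: linear discs $\zeta\mapsto\zeta v$ with $\left\Vert v\right\Vert _{\Omega}=1$ are complex geodesics of any balanced convex domain (Hahn--Banach), which gives $\kappa_{\Omega}(a;a)$ and the lower bound for $\left\Vert \operatorname{d}\phi(a)\right\Vert _{\Omega}$ without any spectral computation, and your extremal example built from an abstract norming functional is tidier than the paper's use of the Hermitian pairing $(z\mid e_{i})$.

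The one genuine gap is exactly where you place the crux: the upper bound $\left\Vert B(a,a)^{-1/2}\right\Vert _{\Omega}\leq(1-\left\Vert a\right\Vert _{\Omega}^{2})^{-1}$. ``A direct computation on the graded pieces'' does not suffice. The operator $B(a,a)^{-1/2}$ is diagonalized by the Peirce decomposition with eigenvalues $(1-\lambda_{i}^{2})^{-1/2}(1-\lambda_{j}^{2})^{-1/2}$, all at most $(1-\lambda_{1}^{2})^{-1}$; but the Peirce projections are orthogonal only for the Hermitian inner product and are not contractions for the spectral norm, so bounding the eigenvalues controls the Hermitian operator norm $\left\Vert B(a,a)^{-1/2}\right\Vert $ and not a priori $\left\Vert B(a,a)^{-1/2}\right\Vert _{\Omega}$. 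The missing mechanism is the paper's Lemma \ref{L4} (equivalently Theorem \ref{differential-norm}): writing $B(u,u)^{t}=B(w,w)$ for a suitable $w$ and invoking the fundamental identity (A5) gives $Q(B(u,u)^{t}z)=B(u,u)^{t}Q(z)B(u,u)^{t}$, which together with $\left\Vert z\right\Vert _{\Omega}^{2}=\left\Vert Q(z)\right\Vert $ shows that the two operator norms of $B(u,u)^{t}$ coincide; only then is the top eigenvalue the answer. You should also correct a slip in the extremal computation: the sum is $\alpha+(1-\alpha^{2})\rho/(1-\alpha\rho)$, not $\alpha+(1-\alpha^{2})\rho/(1-\rho)$ --- one may not overestimate a quantity one wants to bound from below --- though the corrected expression still exceeds $1$ precisely when $\rho>1/(1+2\alpha)$, so the conclusion stands.
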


Actually, the proof in \cite{LiuWang2007} depends on the following result,
which is proved by the authors for classical domains of type I and IV, using
\textit{ad hoc} computations:

\begin{theorem}
\label{differential-norm}Let $\Omega\subset V$ be a bounded circled symmetric
domain. Let $\left\Vert \ \right\Vert _{\Omega}$ be the associated spectral
norm on $V$. Let $u\in\Omega$ and let $\phi\in\operatorname{Aut}\Omega$ such
that $\phi(u)=0$. Then the operator norm of the derivative of $\phi$ at $u$
is
\[
\left\Vert \operatorname{d}\phi(u)\right\Vert _{\Omega}=\frac{1}{1-\left\Vert
u\right\Vert _{\Omega}^{2}}.
\]

\end{theorem}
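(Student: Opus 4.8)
The plan is to translate everything into the Jordan-triple description of a bounded circled symmetric domain, so that the statement becomes a computation with the Bergman operator. Recall that $V$ carries a Hermitian Jordan triple product $\{x,y,z\}$ for which $\Omega$ is precisely the open unit ball of $\left\Vert \ \right\Vert _{\Omega}$, and that the isotropy subgroup $K=\{g\in\operatorname{Aut}\Omega:g(0)=0\}$ consists of linear maps which are isometries of $\left\Vert \ \right\Vert _{\Omega}$. First I would reduce to a canonical automorphism. For $u\in\Omega$ let $g_{u}$ be the transvection normalized by $g_{u}(0)=u$ and $\operatorname{d}g_{u}(0)=B(u,u)^{1/2}$ (a positive operator), where $B(x,y)=\operatorname{Id}-2\,x\,\square\,y+Q(x)Q(y)$ is the Bergman operator, $x\,\square\,y$ is the box operator $z\mapsto\{x,y,z\}$, and $Q(x)z=\{x,z,x\}$. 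Any $\phi$ with $\phi(u)=0$ can be written $\phi=k\circ g_{u}^{-1}$ with $k\in K$; since $k$ is a linear isometry, $\left\Vert \operatorname{d}\phi(u)\right\Vert _{\Omega}=\left\Vert k\circ\operatorname{d}g_{u}^{-1}(u)\right\Vert _{\Omega}=\left\Vert \operatorname{d}g_{u}^{-1}(u)\right\Vert _{\Omega}$. Thus the operator norm does not depend on the chosen $\phi$, and it suffices to treat $\phi=g_{u}^{-1}$.

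Next I would identify this differential with a Bergman operator. With the normalization above one has $\operatorname{d}g_{u}(0)=B(u,u)^{1/2}$ (the standard form of the transvection in Loos's description), so that $\operatorname{d}g_{u}^{-1}(u)=\bigl(\operatorname{d}g_{u}(0)\bigr)^{-1}=B(u,u)^{-1/2}$; the same identity can be read off from the covariance of the Bergman operator under $\operatorname{Aut}\Omega$ evaluated at $u$. The theorem is therefore equivalent to
\[
\left\Vert B(u,u)^{-1/2}\right\Vert _{\Omega}=\frac{1}{1-\left\Vert u\right\Vert _{\Omega}^{2}}.
\]

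Now I would compute the spectrum of $B(u,u)$ using the spectral theorem for Jordan triple systems. Write $u=\sum_{j=1}^{r}\lambda_{j}e_{j}$ for a frame of orthogonal tripotents $e_{1},\dots,e_{r}$ with $\lambda_{j}\geq0$, so that $\left\Vert u\right\Vert _{\Omega}=\max_{j}\lambda_{j}=:t$. In the joint Peirce decomposition $V=\bigoplus_{0\leq i\leq j\leq r}V_{ij}$ (with the convention $\lambda_{0}=0$) the operator $B(u,u)$ is diagonal and acts on $V_{ij}$ by the scalar $(1-\lambda_{i}^{2})(1-\lambda_{j}^{2})$. The smallest eigenvalue is $(1-t^{2})^{2}$, attained on $V_{jj}$ for an index with $\lambda_{j}=t$, so the largest eigenvalue of $B(u,u)^{-1/2}$ is $1/(1-t^{2})$. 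The lower bound is then immediate: applying $B(u,u)^{-1/2}$ to the tripotent $e_{j}$ (which has $\left\Vert e_{j}\right\Vert _{\Omega}=1$) gives $B(u,u)^{-1/2}e_{j}=(1-t^{2})^{-1}e_{j}$, whence $\left\Vert B(u,u)^{-1/2}\right\Vert _{\Omega}\geq1/(1-t^{2})$.

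The matching upper bound is the one genuinely nontrivial point, since $\left\Vert \ \right\Vert _{\Omega}$ is not Hilbertian and the operator norm of a positive Peirce-diagonal operator need not equal its largest eigenvalue. The feature to exploit is that the eigenvalue $(1-\lambda_{i}^{2})(1-\lambda_{j}^{2})$ \emph{factors} over the two indices. I would make this precise through the polydisc theorem: on the flat polydisc $P=\{\sum_{j}\zeta_{j}e_{j}\}$ spanned by the frame one has $\left\Vert \sum_{j}\zeta_{j}e_{j}\right\Vert _{\Omega}=\max_{j}|\zeta_{j}|$, and the restriction of $B(u,u)^{-1/2}$ to $P$ is the diagonal map $\zeta_{j}\mapsto(1-\lambda_{j}^{2})^{-1}\zeta_{j}$, of sup-norm operator norm exactly $1/(1-t^{2})$. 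To pass from $P$ to all of $V$ I would use that $\overline{\Omega}$ is convex and balanced, so that $\left\Vert B(u,u)^{-1/2}\right\Vert _{\Omega}=\max\{\left\Vert B(u,u)^{-1/2}c\right\Vert _{\Omega}:c\text{ a maximal tripotent}\}$, the maximal tripotents being the extreme points of $\overline{\Omega}$, and reduce the estimate on each such $c$ to the factored, one-sided action encoded by the product structure of the eigenvalues. This last reduction, showing that the off-diagonal Peirce directions cannot raise the norm beyond the diagonal value $1/(1-t^{2})$, is where I expect the main difficulty to lie; it is exactly the point at which the case-by-case proof replaces an intrinsic argument by the matrix identity $\left\Vert z\mapsto AzB\right\Vert =\left\Vert A\right\Vert \,\left\Vert B\right\Vert $, and the task here is to carry it out in the Peirce calculus without appealing to the classification.
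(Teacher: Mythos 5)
Your reduction to the canonical transvection ($\phi=k\circ g_{u}^{-1}$ with $k$ a linear isometry, $\operatorname{d}\phi(u)=k\circ B(u,u)^{-1/2}$), the Peirce-spectral computation of the eigenvalues of $B(u,u)$, and the lower bound obtained by applying $B(u,u)^{-1/2}$ to a tripotent of the frame all match the paper. But the upper bound
\[
\left\Vert B(u,u)^{-1/2}\right\Vert _{\Omega}\leq\frac{1}{1-\left\Vert u\right\Vert _{\Omega}^{2}}
\]
is exactly the content of the theorem beyond linear algebra, and you leave it open: you correctly note that a Peirce-diagonal positive operator need not have spectral-norm operator norm equal to its largest Hermitian eigenvalue, propose to reduce to maximal tripotents (the extreme points of $\overline{\Omega}$), and then stop at what you yourself call the main difficulty. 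A maximal tripotent $c$ realizing the maximum has no reason to be compatible with the frame diagonalizing $B(u,u)$, so the polydisc theorem for the frame of $u$ gives no control over $\left\Vert B(u,u)^{-1/2}c\right\Vert _{\Omega}$; as written, the argument does not close.

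The missing idea is the fundamental identity (A5), $Q(B(x,y)z)=B(x,y)Q(z)B(y,x)$, combined with (A12), $\left\Vert z\right\Vert _{\Omega}^{2}=\left\Vert Q(z)\right\Vert $ (Hermitian operator norm of $Q(z)$). Writing $B(u,u)^{t}=B(w,w)$ for a suitable $w=\sum_{i}\mu_{i}e_{i}$ in the same frame (for $t\geq0$, then inverting to cover $t<0$), one gets $Q(B(u,u)^{t}z)=B(u,u)^{t}Q(z)B(u,u)^{t}$, hence
\[
\left\Vert B(u,u)^{t}z\right\Vert _{\Omega}^{2}=\left\Vert B(u,u)^{t}Q(z)B(u,u)^{t}\right\Vert \leq\left\Vert B(u,u)^{t}\right\Vert ^{2}\left\Vert z\right\Vert _{\Omega}^{2},
\]
so the spectral operator norm of $B(u,u)^{t}$ is bounded by its Hermitian operator norm, i.e.\ by its largest eigenvalue; this is the paper's Lemma \ref{L4}, and together with your lower bound it finishes the proof. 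This identity is precisely the classification-free substitute for the matrix identity $\left\Vert z\mapsto AzB\right\Vert =\left\Vert A\right\Vert \left\Vert B\right\Vert $ that you were looking for, and it bypasses the extreme-point reduction entirely.
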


In Section \ref{SEC3}, we give a classification independent proof of this
result and of Theorem \ref{LiuWang-BohrTheorem}, which is valid for any
bounded circled symmetric domain.

In the above generalization of Bohr's theorem, one considers the Taylor
expansion%
\[
f(z)=\sum_{k=0}^{\infty}f_{k}(z)
\]
of a bounded holomorphic function or map, and one asks for which $z$ the
inequality (\ref{eq1}) holds. One may ask the same type of question for other
decompositions of $f$. For example, the following problem has been considered
recently by several authors:

\begin{problem}
Let $\Omega$ be the unit ball of the $\ell^{p}$ norm:%
\[
\Omega=\left\{  \left(  z_{1},\cdots,z_{n}\right)  \in\mathbb{C}^{n}\mid
\sum_{k=1}^{n}\left\vert z_{k}\right\vert ^{p}<1\right\}  .
\]
Let $f:\Omega\rightarrow\Delta$ be a holomorphic function and consider the
expansion of $f$ in monomials%
\[
f(z)=\sum_{k_{1},\ldots,k_{n}=0}^{\infty}a_{k_{1}\cdots k_{n}}z_{1}^{k_{1}%
}\ldots z_{n}^{k_{n}}.
\]
Determine the best constant $K$ such that $z\in K\Omega$ ensures
\[
\sum_{k_{1},\ldots,k_{n}=0}^{\infty}\left\vert a_{k_{1}\cdots k_{n}}%
z_{1}^{k_{1}}\ldots z_{n}^{k_{n}}\right\vert <1
\]
for all $f:\Omega\rightarrow\Delta$.
\end{problem}

For results about this type of problem, see \cite{DineenTimoney1989},
\linebreak\cite{DineenTimoney1991}, \cite{BoasKhavinson1997},
\cite{Aizenberg2000}, \cite{Boas2000}, \linebreak%
\cite{DefantGarciaMaestre2003}

\section{H.~Bohr's theorem\label{SEC1}}%

\subsection{}

We first recall Bohr's theorem in dimension $1$. We present here a rather
elementary proof of this theorem, based on an inequality of Carath\'{e}odory
or on an improvement of this inequality using a lemma of F.~Wiener. This lemma
has been generalized in \cite{LiuWang2007} to some classes of bounded
symmetric domains. There are other proofs of Bohr's theorem in the literature,
for example by S.~Sidon \cite{Sidon1927}, following L.~Fej\'{e}r's method of
positive kernels \cite{Fejer1925}; the same proof was published later by
M.~Tomi\'{c} \cite{Tomic1962}, who added the case $a_{0}=f(0)=0$ (see Section
\ref{Bohr-Ricci}).

\begin{theorem}
[\cite{Bohr1914}]\label{HBohrTheoremDim1}Let $f:\Delta\rightarrow\Delta$ be a
holomorphic function from the unit disc $\Delta\subset\mathbb{C}$ into itself.
If $f(z)=\sum_{k=0}^{\infty}a_{k}z^{k}$, then
\begin{equation}
\sum_{k=0}^{\infty}\left\vert a_{k}z^{k}\right\vert \leq1 \label{eq2-1-1}%
\end{equation}
for $\left\vert z\right\vert \leq\frac{1}{3}$. The value $\frac{1}{3}$ is
optimal: for $\frac{1}{3}<r<1$, there exists a holomorphic function
$f:D\rightarrow D$ such that%
\[
\sum_{k=0}^{\infty}\left\vert a_{k}\right\vert r^{k}>1.
\]

\end{theorem}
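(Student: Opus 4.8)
The plan is to reduce inequality \eqref{eq2-1-1} to a single sharp bound on the Taylor coefficients of a bounded holomorphic function, namely $\left\vert a_{n}\right\vert \le 1-\left\vert a_{0}\right\vert ^{2}$ for every $n\ge 1$ (the lemma of F.~Wiener), and then to sum a geometric series. I would first establish this coefficient bound. The idea is to isolate the coefficient $a_{n}$ by averaging $f$ over the $n$-th roots of unity: setting $\omega=e^{2\pi i/n}$ and $F(z)=\frac1n\sum_{j=0}^{n-1}f(\omega^{j}z)$, the function $F$ again maps $\Delta$ into $\Delta$ (being an average of points of the convex set $\Delta$) and has the form $F(z)=a_{0}+a_{n}z^{n}+a_{2n}z^{2n}+\cdots$; hence $F(z)=h(z^{n})$ for a single-valued holomorphic $h:\Delta\to\Delta$ with $h(0)=a_{0}$ and $h'(0)=a_{n}$. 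Composing $h$ with the automorphism $w\mapsto (w-a_{0})/(1-\bar a_{0}w)$ (licit since $\left\vert a_{0}\right\vert <1$, as $f$ maps into the \emph{open} disc) produces a self-map of $\Delta$ fixing $0$; the Schwarz lemma applied to its derivative at $0$, together with the chain rule and the value $1/(1-\left\vert a_{0}\right\vert ^{2})$ of the derivative of the automorphism at $a_{0}$, yields exactly $\left\vert a_{n}\right\vert \le 1-\left\vert a_{0}\right\vert ^{2}$.

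Granting the coefficient bound, write $a=\left\vert a_{0}\right\vert \in[0,1)$ and $r=\left\vert z\right\vert$. Then
\[
\sum_{k=0}^{\infty}\left\vert a_{k}z^{k}\right\vert \le a+(1-a^{2})\sum_{k=1}^{\infty}r^{k}=a+(1-a^{2})\frac{r}{1-r},
\]
and an elementary factorization gives
\[
a+(1-a^{2})\frac{r}{1-r}-1=-(1-a)\Bigl(1-(1+a)\frac{r}{1-r}\Bigr).
\]
Since $1-a>0$ and $(1+a)\frac{r}{1-r}\le 2\cdot\frac{r}{1-r}$, the bracket is nonnegative as soon as $2r\le 1-r$, that is $r\le\frac13$. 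This yields $\sum_{k}\left\vert a_{k}z^{k}\right\vert \le 1$ for $\left\vert z\right\vert \le\frac13$, which is \eqref{eq2-1-1}.

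For optimality I would exhibit the automorphisms $\varphi_{a}(z)=\frac{a-z}{1-az}$ with $a\in(0,1)$, whose coefficients are $a_{0}=a$ and $a_{k}=-a^{k-1}(1-a^{2})$ for $k\ge1$, so that
\[
\sum_{k=0}^{\infty}\left\vert a_{k}\right\vert r^{k}=a+(1-a^{2})\frac{r}{1-ar}.
\]
Clearing the positive denominator $1-ar$, one finds that this sum exceeds $1$ precisely when $r(1+2a)>1$. Given any $r$ with $\frac13<r<1$, the left-hand side tends to $3r>1$ as $a\to1^{-}$, so choosing $a$ close enough to $1$ forces $\sum_{k}\left\vert a_{k}\right\vert r^{k}>1$, proving that $\frac13$ cannot be enlarged.

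I expect the main obstacle to be the sharp coefficient estimate $\left\vert a_{n}\right\vert \le 1-\left\vert a_{0}\right\vert ^{2}$: the naive Parseval bound $\sum_{k\ge1}\left\vert a_{k}\right\vert ^{2}\le 1-\left\vert a_{0}\right\vert ^{2}$ only gives $\left\vert a_{n}\right\vert \le\sqrt{1-\left\vert a_{0}\right\vert ^{2}}$, which is far too weak to produce any positive Bohr radius. The root-of-unity symmetrization, which reduces the $n$-th coefficient to a first-coefficient (Schwarz--Pick) estimate, is therefore the crucial step; everything after it is a routine computation with geometric series.
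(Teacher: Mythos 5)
Your proposal is correct and follows essentially the same route as the paper: the sharp coefficient bound $\lvert a_{n}\rvert\le 1-\lvert a_{0}\rvert^{2}$ obtained by root-of-unity symmetrization plus the Schwarz--Pick estimate at the origin (F.~Wiener's lemma, Lemma \ref{Wiener} in the text), followed by summation of the geometric series, and the same extremal family $z\mapsto\frac{a-z}{1-az}$ with $a\to1^{-}$ for optimality. The only differences are cosmetic (your factorization of $a+(1-a^{2})\frac{r}{1-r}-1$ versus the paper's completion to $1-\frac12(1-\lvert a_{0}\rvert)^{2}$).
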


For a holomorphic function $f:\Delta\rightarrow\Delta$ with Taylor expansion
$f(z)=\sum_{k=0}^{\infty}a_{k}z^{k}$, denote by $\mathfrak{M}_{f}(r)$ the sum
of absolute values:%
\[
\mathfrak{M}_{f}(r)=\sum_{k=0}^{\infty}\left\vert a_{k}\right\vert r^{k}.
\]

Let us first prove that the bound $\frac{1}{3}$ cannot be improved. For
$0<\alpha<1$, let $f$ be defined by
\[
f(z)=\frac{\alpha-z}{1-\alpha z}.
\]
Then
\[
f(z)=\alpha+\sum_{k=1}^{\infty}\left(  \alpha^{k+1}-\alpha^{k-1}\right)
z^{k},
\]
so that $f(z)=\sum_{k=0}^{\infty}a_{k}z^{k}$, with $a_{0}=\alpha$,
$a_{k}=\alpha^{k+1}-\alpha^{k-1}$. As $a_{k}$ is negative for $k>0$,
\[
\mathfrak{M}(r)=\alpha-\sum_{k=1}^{\infty}\left(  \alpha^{k+1}-\alpha
^{k-1}\right)  r^{k}=2\alpha-f(r).
\]
Then $\mathfrak{M}(r)>1$ when $r>\frac{1}{1+2\alpha}$; as $\frac{1}{1+2\alpha
}\rightarrow\frac{1}{3}+0$ when $\alpha\rightarrow1-0$, there exists for each
$r>\frac{1}{3}$ a holomorphic function $f:\Delta\rightarrow\Delta$ such that
$\mathfrak{M}_{f}(r)>1$.

To prove the direct part of Theorem \ref{HBohrTheoremDim1}, one may use one of
the following lemmas.

\begin{lemma}
Let $f:\Delta\rightarrow\Delta$ be a holomorphic function, with $f(z)=\sum
_{k=0}^{\infty}a_{k}z^{k}$. Then for all $k>0$,
\begin{equation}
\left\vert a_{k}\right\vert \leq2(1-\left\vert a_{0}\right\vert ).
\label{eq-dim1-1}%
\end{equation}

\end{lemma}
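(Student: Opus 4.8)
The plan is to deduce the estimate directly from Carath\'eodory's inequality for functions of positive real part. Recall that inequality in the normalized form I intend to use: if $p(z)=1+\sum_{k\geq 1}c_{k}z^{k}$ is holomorphic on $\Delta$ with $\Re p>0$ on $\Delta$, then $\left\vert c_{k}\right\vert \leq 2$ for every $k\geq 1$.

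First I would normalize so that $a_{0}$ is real and nonnegative. Multiplying $f$ by a unimodular constant $e^{-i\theta}$ leaves it a holomorphic self-map of $\Delta$, replaces $a_{0}$ by $e^{-i\theta}a_{0}$ and each $a_{k}$ by $e^{-i\theta}a_{k}$, and therefore changes neither $\left\vert a_{0}\right\vert$ nor $\left\vert a_{k}\right\vert$. Hence I may assume $a_{0}=\left\vert a_{0}\right\vert \geq 0$, and since $f(0)\in\Delta$ we also have $a_{0}<1$.

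The key observation is that $P(z):=1-f(z)$ has strictly positive real part: because $\left\vert f(z)\right\vert <1$ we get $\Re f(z)\leq\left\vert f(z)\right\vert <1$, so $\Re P(z)=1-\Re f(z)>0$ on $\Delta$. Moreover $P(0)=1-a_{0}>0$, and reading off coefficients from $f(z)=\sum_{k\geq 0}a_{k}z^{k}$ gives the coefficient of $z^{k}$ in $P$ equal to $-a_{k}$ for $k\geq 1$. Thus $P/(1-a_{0})$ is a normalized Carath\'eodory function, and applying the inequality above to it yields $\left\vert a_{k}\right\vert /(1-a_{0})\leq 2$, that is, $\left\vert a_{k}\right\vert \leq 2(1-a_{0})=2(1-\left\vert a_{0}\right\vert )$, as desired.

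There is no genuinely hard step here; the only points requiring care are verifying $\Re P>0$ (which uses $\left\vert f\right\vert <1$, not merely $\left\vert f\right\vert \leq 1$) and applying Carath\'eodory to $P/(1-a_{0})$ rather than to $P$, so that the normalization $P(0)/(1-a_{0})=1$ is respected. I would also note that the bound is not sharp: the factor $2$ can be replaced by $1+\left\vert a_{0}\right\vert$, giving the stronger estimate $\left\vert a_{k}\right\vert \leq 1-\left\vert a_{0}\right\vert ^{2}$. That improvement is F.~Wiener's lemma; it follows by averaging $f$ over the $k$-th roots of unity to obtain a holomorphic self-map $G$ of $\Delta$ with $G(0)=a_{0}$ and $G'(0)=a_{k}$, and then invoking the Schwarz--Pick estimate $\left\vert G'(0)\right\vert \leq 1-\left\vert G(0)\right\vert ^{2}$. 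For Bohr's theorem the weaker inequality $2(1-\left\vert a_{0}\right\vert )$ proved above already suffices, and it is this form that will generalize to bounded symmetric domains.
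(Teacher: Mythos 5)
Your argument is correct and is precisely the route the paper indicates for this lemma: it notes that (\ref{eq-dim1-1}) ``results directly from inequalities, due to C.~Carath\'eodory, for holomorphic functions in $\Delta$ with positive real part,'' which is exactly your application of the coefficient bound $\left\vert c_{k}\right\vert\leq2$ to the normalized function $(1-f(z))/(1-a_{0})$ after rotating so that $a_{0}\geq0$. Your closing remark also matches the paper, which likewise observes that the sharper Wiener bound $\left\vert a_{k}\right\vert\leq1-\left\vert a_{0}\right\vert^{2}$ (proved there by the same rotation-averaging and Schwarz--Pick argument you sketch) supersedes this estimate.
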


\begin{lemma}
\label{Wiener}Let $f:\Delta\rightarrow\Delta$ be a holomorphic function, with
$f(z)=\sum_{k=0}^{\infty}a_{k}z^{k}$. Then for all $k>0$,
\begin{equation}
\left\vert a_{k}\right\vert \leq1-\left\vert a_{0}\right\vert ^{2}.
\label{eq-dim1-2}%
\end{equation}

\end{lemma}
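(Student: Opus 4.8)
The plan is to reduce the statement to a bound on the first-order coefficient of an auxiliary disc-to-disc map, and then to invoke the Schwarz--Pick inequality. The engine of the reduction is an averaging over $k$-th roots of unity, which isolates the coefficient $a_{k}$ together with the constant term $a_{0}$, discarding all the intermediate coefficients.

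First I would fix $k>0$ and set $\omega=e^{2\pi\mathrm{i}/k}$, then form
\[
g(z)=\frac{1}{k}\sum_{j=0}^{k-1}f(\omega^{j}z).
\]
The orthogonality relation $\frac{1}{k}\sum_{j=0}^{k-1}\omega^{jn}=1$ when $k\mid n$ and $0$ otherwise kills every term of the Taylor series whose exponent is not a multiple of $k$, so that $g(z)=\sum_{m=0}^{\infty}a_{mk}z^{mk}=a_{0}+a_{k}z^{k}+\cdots$. Since $\Delta$ is convex and each value $f(\omega^{j}z)$ lies in $\Delta$, the average $g$ again maps $\Delta$ into $\Delta$.

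Next, because only powers $z^{mk}$ occur, I would substitute $w=z^{k}$: this yields a holomorphic map $h:\Delta\rightarrow\Delta$ with $h(w)=a_{0}+a_{k}w+a_{2k}w^{2}+\cdots$, so that $h(0)=a_{0}$ and $h'(0)=a_{k}$. Applying the Schwarz--Pick inequality to $h$ at the origin gives $\left\vert h'(0)\right\vert \leq 1-\left\vert h(0)\right\vert ^{2}$, that is $\left\vert a_{k}\right\vert \leq 1-\left\vert a_{0}\right\vert ^{2}$, which is precisely (\ref{eq-dim1-2}). Equivalently, and without citing Schwarz--Pick, one may compose $h$ with the automorphism $w\mapsto (w-a_{0})/(1-\overline{a_{0}}\,w)$ to obtain a holomorphic self-map of $\Delta$ fixing the origin, and read off the bound from the ordinary Schwarz lemma after a one-line computation of the derivative at $0$.

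The one point that requires care—and the main thing to get right—is the reduction itself: checking that the averaged map $g$ still takes values in $\Delta$ (this is exactly where convexity of $\Delta$ enters) and that the substitution $w=z^{k}$ produces a genuine holomorphic self-map of $\Delta$, not merely a formal power series. Once this is secured, all the analytic content is carried by Schwarz--Pick, and no direct estimate on the higher coefficients $a_{2k},a_{3k},\dots$ is needed.
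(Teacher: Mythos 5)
Your proposal is correct and follows essentially the same route as the paper: the paper likewise averages $f$ over the $k$-th roots of unity, uses convexity of $\Delta$ to see the average is still a self-map, passes to $h_k(w)=\sum_m a_{km}w^m$, and then applies the case $k=1$, which it establishes exactly by your ``alternative'' argument of composing with $w\mapsto(w-a_0)/(1-\overline{a_0}w)$ and invoking the Schwarz lemma (i.e.\ Schwarz--Pick at the origin). No substantive difference.
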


The inequalities (\ref{eq-dim1-1}) result directly from inequalities, due to
C.~Carath\'{e}odory, for holomorphic functions in $\Delta$ with positive real
part; they can also be used to prove Bohr's theorem. Note that
(\ref{eq-dim1-2}) is slightly sharper than (\ref{eq-dim1-1}), as
$(1-t^{2})\leq2(1-t)$ for all $t\in\mathbb{R}$. We will prove directly the
second lemma.

First we prove the special case:%
\begin{equation}
\left\vert a_{1}\right\vert \leq1-\left\vert a_{0}\right\vert ^{2}.
\label{eq-dim1-4}%
\end{equation}
Consider the automorphism $\phi$ of $\Delta$ defined by
\[
\phi(z)=\frac{z-a_{0}}{1-\overline{a_{0}}z};
\]
we have
\begin{align*}
\phi(a_{0})  &  =0,\qquad\phi^{\prime}(a_{0})=\frac{1}{1-a_{0}\overline{a_{0}%
}},\\
\left(  \phi\circ f\right)  ^{\prime}(0)  &  =\phi^{\prime}(a_{0})f^{\prime
}(0)=\frac{a_{1}}{1-a_{0}\overline{a_{0}}}.
\end{align*}
As $\phi\circ f$ maps $\Delta$ into $\Delta$ and $(\phi\circ f)(0)=0$, we have
$\left\vert \left(  \phi\circ f\right)  ^{\prime}(0)\right\vert \leq1$, which
implies%
\[
\left\vert a_{1}\right\vert \leq1-a_{0}\overline{a_{0}}.
\]

From (\ref{eq-dim1-4}), we will now deduce (\ref{eq-dim1-1}) for all $k>0$.
For $k>0$, define%
\[
g_{k}(z)=\frac{1}{k}\sum_{j=1}^{k}f(\operatorname{e}^{2\operatorname{i}\pi
j/k}z)=\sum_{m=0}^{\infty}a_{km}z^{km}.
\]
From the first equality, it results that $g_{k}$ maps $\Delta$ into $\Delta$,
and so does also $h_{k}$ defined by%
\[
h_{k}(z)=\sum_{m=0}^{\infty}a_{km}z^{m}.
\]
We have $h_{k}(0)=a_{0}$ and $h_{k}^{\prime}(0)=a_{k}$ ; applying the
inequality (\ref{eq-dim1-4}) to the function $h_{k}$ yields
\[
\left\vert a_{k}\right\vert \leq1-a_{0}\overline{a_{0}}.
\]

\begin{proof}
[Proof of Bohr's theorem]Using (\ref{eq-dim1-2}), we deduce
\[
\mathfrak{M}(r)=\sum_{k=0}^{\infty}\left\vert a_{k}\right\vert r^{k}%
\leq\left\vert a_{0}\right\vert +\left(  1-a_{0}\overline{a_{0}}\right)
\frac{r}{1-r}%
\]
and, for $r\leq\frac{1}{3}$,
\[
\mathfrak{M}(r)\leq\left\vert a_{0}\right\vert +\frac{1}{2}\left(
1-a_{0}\overline{a_{0}}\right)  =1-\frac{1}{2}\left(  1-\left\vert
a_{0}\right\vert ^{2}\right)  <1,
\]
as $\left\vert a_{0}\right\vert =\left\vert f(0)\right\vert <1$.
\end{proof}

\subsection{ \label{Bohr-Ricci}}

Another Bohr type theorem holds in dimension $1$ for the class of holomorphic
functions $f:\Delta\rightarrow\Delta$ such that $f(0)=0$. It was proved in
\cite{Tomic1962} using an analogue of the argument in \cite{Sidon1927}. An
elementary proof for this, which uses another result of L.\ Fej\'{e}r
\cite{Fejer1914}, can also be found in \cite{Landau1925}.

This type of problem was generalized and partially solved in \cite{Ricci1955}
and \cite{Bombieri1962} by G.\ Ricci and E.\ Bombieri.

\begin{definition}
Consider the following classes of analytic functions:%
\begin{align*}
\mathcal{F}_{0} &  =\left\{  f:\Delta\rightarrow\Delta\right\}  ,\\
\mathcal{F}_{0,\alpha} &  =\left\{  f:\Delta\rightarrow\Delta\mid
f(0)=\alpha\right\}  \qquad(0\leq\alpha<1),\\
\mathcal{F}_{m} &  =\left\{  f:\Delta\rightarrow\Delta\mid f(z)=\sum
_{k=m}^{\infty}a_{k}z^{k}\right\}  \qquad(m\in\mathbb{N)},\\
\mathcal{F}_{m,\alpha} &  =\left\{  f:\Delta\rightarrow\Delta\mid f(z)=\alpha
z^{m}+\sum_{k=m+1}^{\infty}a_{k}z^{k}\right\}  \qquad(m\in\mathbb{N}%
,\quad0\leq\alpha<1).
\end{align*}
Define the corresponding \emph{Bohr numbers} by%
\begin{align*}
B_{0} &  =\sup\left\{  r\mid\mathfrak{M}_{f}(r)<1\mathrm{\ for\ all\ }%
f\in\mathcal{F}_{0}\right\}  ,\\
B_{0,\alpha} &  =\sup\left\{  r\mid\mathfrak{M}_{f}(r)<1\mathrm{\ for\ all\ }%
f\in\mathcal{F}_{0,\alpha}\right\}  ,\\
B_{m} &  =\sup\left\{  r\mid\mathfrak{M}_{f}(r)<1\mathrm{\ for\ all\ }%
f\in\mathcal{F}_{m}\right\}  ,\\
B_{m,\alpha} &  =\sup\left\{  r\mid\mathfrak{M}_{f}(r)<1\mathrm{\ for\ all\ }%
f\in\mathcal{F}_{m,\alpha}\right\}  .
\end{align*}

\end{definition}

The original Bohr theorem means that $B_{0}=\frac{1}{3}$.

The result in \cite{Tomic1962} (already proved in \cite{Landau1925}) says that
$B_{1}>\frac{1}{2}$. In  \cite{Ricci1955}, the author proves, among other
results, that  $\frac{3}{5}<B_{1}\leq\frac{1}{\sqrt{2}}$. The optimal result
$B_{1}=\frac{1}{\sqrt{2}}$ was obtained by \cite{Bombieri1962}.

Let us prove that Ricci's result $B_{1}>\frac{3}{5}$ easily follows from
Wiener's lemma \ref{Wiener}.

\begin{proposition}
\label{Bohr_Ricci_0}Let $f:\Delta\rightarrow\Delta$ be a holomorphic function
such that $f(0)=0,$ $f(z)=\sum_{k=1}^{\infty}a_{k}z^{k}$. Then
\[
\sum_{k=0}^{\infty}\left\vert a_{k}z^{k}\right\vert \leq1
\]
for $\left\vert z\right\vert \leq\frac{3}{5}$.
\end{proposition}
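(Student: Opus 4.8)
The plan is to mimic the one-dimensional Bohr argument already given, but to split off the first term and exploit the extra information that $a_0 = f(0) = 0$. Since $f(0)=0$, Wiener's Lemma \ref{Wiener} gives $|a_k| \le 1 - |a_0|^2 = 1$ for all $k \ge 1$, but this bound alone recovers only $B_0 = \tfrac13$-type estimates once summed geometrically. The refinement comes from treating $a_1$ separately: we also know $|a_1| \le 1$ from the Schwarz lemma, and more importantly the geometric tail $\sum_{k\ge 2}|a_k|r^k$ can be controlled while the linear term $|a_1|r$ is handled by hand.

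First I would write, for $|z| = r \le \tfrac35$,
\begin{equation}
\mathfrak{M}_f(r) = \sum_{k=1}^{\infty}|a_k|\,r^k = |a_1|\,r + \sum_{k=2}^{\infty}|a_k|\,r^k. \label{eq-split}
\end{equation}
Applying Wiener's inequality $|a_k|\le 1$ (valid here since $a_0=0$) to the tail yields
\begin{equation}
\sum_{k=2}^{\infty}|a_k|\,r^k \le \sum_{k=2}^{\infty} r^k = \frac{r^2}{1-r}. \label{eq-tail}
\end{equation}
It remains to bound $|a_1|\,r$. The naive bound $|a_1|\le 1$ gives $\mathfrak{M}_f(r)\le r + r^2/(1-r)$, and one checks this is $\le 1$ precisely when $r \le \tfrac12$, which is too weak. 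So the key step is to use a \emph{sharper} relation between $|a_1|$ and the tail.

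The main obstacle — and the crux of Ricci's $\tfrac35$ bound — is to couple the size of $a_1$ to the remaining coefficients, rather than bounding the two pieces independently. The plan is to apply Wiener's lemma not to $f$ but to the function $g(z) = f(z)/z = a_1 + a_2 z + \cdots$, which again maps $\Delta$ into $\Delta$ since $f(0)=0$ forces $g$ to be holomorphic and bounded by $1$ (by the Schwarz lemma $|f(z)|\le|z|$). Now $g(0) = a_1$, so Lemma \ref{Wiener} applied to $g$ gives $|a_{k+1}| = |$(the $k$-th coefficient of $g$)$| \le 1 - |a_1|^2$ for all $k \ge 1$, i.e.\ $|a_k| \le 1 - |a_1|^2$ for all $k\ge 2$. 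Substituting this improved tail bound into \eqref{eq-split} gives
\begin{equation}
\mathfrak{M}_f(r) \le |a_1|\,r + \bigl(1-|a_1|^2\bigr)\frac{r^2}{1-r}. \label{eq-final}
\end{equation}

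The remaining step is elementary calculus: treating the right-hand side of \eqref{eq-final} as a function of $t = |a_1| \in [0,1]$ for fixed $r$, one maximizes the quadratic $t\mapsto rt + (1-t^2)\tfrac{r^2}{1-r}$ and verifies that the maximum stays $\le 1$ for $r \le \tfrac35$. Writing $c = r^2/(1-r)$, the expression is $rt + c(1-t^2)$, whose unconstrained maximum over $t$ is attained at $t^* = r/(2c) = (1-r)/(2r)$; substituting and requiring the result to be at most $1$ reduces to a polynomial inequality in $r$ that is satisfied up to $r = \tfrac35$. I expect the only delicate point to be checking that $t^*$ lies in $[0,1]$ over the relevant range of $r$ (so that the interior critical point, rather than an endpoint, governs the bound), after which the conclusion $\mathfrak{M}_f(r)\le 1$ for $|z|\le\tfrac35$ follows.
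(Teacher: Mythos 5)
Your proposal is correct and follows essentially the same route as the paper: pass to $g(z)=f(z)/z$ via the Schwarz lemma, apply Wiener's Lemma \ref{Wiener} to $g$ to get $\left\vert a_{k}\right\vert \leq1-\left\vert a_{1}\right\vert ^{2}$ for $k\geq2$, and bound $\mathfrak{M}_{f}(r)\leq\left\vert a_{1}\right\vert r+\left(1-\left\vert a_{1}\right\vert ^{2}\right)\frac{r^{2}}{1-r}$. The only cosmetic difference is in the last step: the paper substitutes $r=\frac{3}{5}$ and completes the square, writing the bound as $1-\frac{9}{10}\left(\frac{1}{3}-\left\vert a_{1}\right\vert\right)^{2}\leq1$, whereas you maximize the quadratic in $\left\vert a_{1}\right\vert$ by calculus; both give the same critical value $\left\vert a_{1}\right\vert=\frac{1}{3}$ and maximum exactly $1$.
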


\begin{proof}
Consider the function%
\[
g(z)=\frac{f(z)}{z}=\sum_{k=0}^{\infty}a_{k+1}z^{k}.
\]
By the Schwarz lemma, $g$ maps $\Delta$ into $\Delta.$ Lemma \ref{Wiener}
applied to $g$ gives
\[
\left\vert a_{k}\right\vert \leq1-\left\vert a_{1}\right\vert ^{2}\qquad(k>1).
\]
Hence%
\begin{align*}
\mathfrak{M}_{f}(r)  & =\sum_{k=0}^{\infty}\left\vert a_{k}\right\vert
r^{k}\leq\left\vert a_{1}\right\vert r+\left(  1-\left\vert a_{1}\right\vert
^{2}\right)  \frac{r^{2}}{1-r},\\
\mathfrak{M}_{f}\left(  \frac{3}{5}\right)    & \leq\frac{3}{5}\left(
\left\vert a_{1}\right\vert +\frac{3}{2}\left(  1-\left\vert a_{1}\right\vert
^{2}\right)  \right)  =1-\frac{9}{10}\left(  \frac{1}{3}-\left\vert
a_{1}\right\vert \right)  ^{2}\leq1.
\end{align*}

\end{proof}

\section{H.~Bohr's theorem for bounded symmetric domains\label{SEC3}}

\subsection{}

Let $\Omega\subset V$ be a bounded circled homogeneous domain in a finite
dimensional complex vector space $V$. See the appendix for notations and
general results. Denote by $\left\Vert \ \right\Vert _{\Omega}$ the spectral
norm associated to $\Omega$.

The main result in \cite{LiuWang2007} is the following theorem, which is
proved there for domains of type $I$ (rectangular matrices) and for polydiscs.

\begin{theorem}
\label{HBohrThm-BSD}Let $\Omega$ be a bounded circled symmetric domain. Denote
by $\left\Vert \ \right\Vert _{\Omega}$ the spectral norm associated to
$\Omega$. Let $f:\Omega\rightarrow\Omega$ be a holomorphic map and let
\[
f=\sum_{k=0}^{\infty}f_{k}%
\]
be its Taylor expansion in $k$-homogeneous polynomials $f_{k}$. Let $\phi
\in\operatorname{Aut}\Omega$ such that $\phi(f(0))=0$. Then%
\begin{equation}
\sum_{k=0}^{\infty}\frac{\left\Vert \operatorname{d}\phi(f(0))\cdot
f_{k}(z)\right\Vert _{\Omega}}{\left\Vert \operatorname{d}\phi
(f(0))\right\Vert _{\Omega}}<1 \label{eq2-07}%
\end{equation}
for all $z$ such that $\left\Vert z\right\Vert _{\Omega}\leq\frac{1}{3}$.

The bound $\frac{1}{3}$ is optimal: for each $z\in\Omega$ with $\left\Vert
z\right\Vert _{\Omega}>\frac{1}{3}$, there exists a holomorphic map
$f=\sum_{k=0}^{\infty}f_{k}:\Omega\rightarrow\Omega$ such that
\[
\sum_{k=0}^{\infty}\frac{\left\Vert \operatorname{d}\phi(f(0))\cdot
f_{k}(z)\right\Vert _{\Omega}}{\left\Vert \operatorname{d}\phi
(f(0))\right\Vert _{\Omega}}>1.
\]

\end{theorem}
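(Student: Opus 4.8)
The plan is to reduce the statement to the one–variable Schwarz lemma, letting the automorphism $\phi$ play the role that Wiener's Lemma \ref{Wiener} plays in dimension one. Write $u=f(0)$ and $A=\operatorname{d}\phi(u)$; by Theorem \ref{differential-norm} we have $\|A\|_\Omega=1/(1-\|u\|_\Omega^{2})$. Since the isotropy subgroup of $\operatorname{Aut}\Omega$ fixing $0$ acts by linear isometries of $\|\cdot\|_\Omega$, any two admissible $\phi$ differ by such an isometry on the left, so the ratio in (\ref{eq2-07}) is independent of the choice of $\phi$; this freedom will be convenient for the sharpness part. Fixing $z\neq0$ and writing $z=re$ with $r=\|z\|_\Omega$ and $\|e\|_\Omega=1$, homogeneity gives $f_k(z)=r^{k}f_k(e)$, so everything reduces to an estimate for $\|A f_k(e)\|_\Omega$.

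The heart of the argument is the inequality
\[
\frac{\|A f_k(e)\|_\Omega}{\|A\|_\Omega}\le 1-\|u\|_\Omega^{2}\qquad(k\ge1,\ \|e\|_\Omega\le1),
\]
the bounded–symmetric–domain analogue of Lemma \ref{Wiener}. I would isolate $f_k$ by the root of unity sectioning already used for the disc: with $\omega=\operatorname{e}^{2\operatorname{i}\pi/k}$, the map $\zeta\mapsto\frac1k\sum_{j=1}^{k}f(\omega^{j}\zeta e)$ sends $\Delta$ into $\Omega$ (here I use that $\Omega$ is circled and convex, see the appendix) and equals $\sum_{m\ge0}\zeta^{km}f_{km}(e)$. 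Hence the single valued power series $H_k(\zeta)=\sum_{m\ge0}\zeta^{m}f_{km}(e)$ satisfies $H_k(\zeta^{k})=\frac1k\sum_{j}f(\omega^{j}\zeta e)$, and since $\zeta\mapsto\zeta^{k}$ maps $\Delta$ onto $\Delta$, $H_k$ is a holomorphic map $\Delta\to\Omega$ with $H_k(0)=u$ and $H_k'(0)=f_k(e)$. Then $\Phi_k=\phi\circ H_k\colon\Delta\to\Omega$ has $\Phi_k(0)=\phi(u)=0$, so the one variable Schwarz lemma (applied to $\ell\circ\Phi_k$ for each supporting functional $\ell$ of $\|\cdot\|_\Omega$) gives $\|\Phi_k'(0)\|_\Omega\le1$; as $\Phi_k'(0)=A f_k(e)$ and $1/\|A\|_\Omega=1-\|u\|_\Omega^{2}$, the claim follows. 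In dimension one this is exactly the proof of Lemma \ref{Wiener}.

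With this in hand the Bohr summation proceeds as in dimension one. The $k=0$ term contributes $\|A u\|_\Omega/\|A\|_\Omega\le\|u\|_\Omega$, so for $r=\|z\|_\Omega\le\frac13$,
\[
\sum_{k=0}^{\infty}\frac{\|A f_k(z)\|_\Omega}{\|A\|_\Omega}\le\|u\|_\Omega+(1-\|u\|_\Omega^{2})\sum_{k\ge1}r^{k}\le\|u\|_\Omega+\tfrac12(1-\|u\|_\Omega^{2})=1-\tfrac12(1-\|u\|_\Omega)^{2}<1,
\]
using $\sum_{k\ge1}r^{k}\le\frac12$ and $\|u\|_\Omega=\|f(0)\|_\Omega<1$. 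This proves (\ref{eq2-07}).

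For sharpness, given $z$ with $\|z\|_\Omega=r>\frac13$ I would take its spectral decomposition $z=\sum_i\lambda_i c_i$ into orthogonal tripotents (appendix), with $\lambda_1=r$ the largest eigenvalue, and let $\ell$ be the corresponding minimal tripotent functional, $\ell(c_1)=1$ and $|\ell(w)|\le\|w\|_\Omega$. Then $f(w)=\widetilde f(\ell(w))\,c_1$, with $\widetilde f$ the one dimensional extremal function of Theorem \ref{HBohrTheoremDim1} for the radius $r$, maps $\Omega$ into $\Omega$ and has homogeneous parts $f_k(w)=\widetilde a_k\,\ell(w)^{k}c_1$; since $\ell(z)=\lambda_1=r$, the sum in (\ref{eq2-07}) equals $\dfrac{\|A c_1\|_\Omega}{\|A\|_\Omega}\,\mathfrak M_{\widetilde f}(r)$. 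Because this ratio is independent of $\phi$, I may compute it using the transvection that preserves the geodesic disc $\Delta c_1$ and acts on it as a Möbius map; then $A c_1$ is a multiple of $c_1$ of modulus $\|A\|_\Omega$, so $\|A c_1\|_\Omega/\|A\|_\Omega=1$ and the sum is $\mathfrak M_{\widetilde f}(r)>1$. The real content sits in two places: the sectioning lemma of the second paragraph, where convexity and circularity of $\Omega$ together with the $\zeta\mapsto\zeta^{k}$ device are needed to keep the auxiliary maps inside $\Omega$; and the geometric fact that the operator norm of $\operatorname{d}\phi(\alpha c)$ is attained in the tripotent direction $c$ — the one point where structural information about bounded symmetric domains (a holomorphic geodesic disc through $c$ on which $\operatorname{Aut}\Omega$ induces $\operatorname{Aut}\Delta$) enters.
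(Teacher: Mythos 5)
Your proof is correct and follows essentially the same route as the paper: the root-of-unity averaging reduces the bound on each homogeneous component to a Schwarz-lemma estimate (this is the paper's Lemma \ref{L5}), which is then combined with $\left\Vert \operatorname{d}\phi(u)\right\Vert _{\Omega}=1/(1-\left\Vert u\right\Vert _{\Omega}^{2})$ (Proposition \ref{L3}) and the geometric series exactly as in Propositions \ref{P1} and \ref{P2}. The only cosmetic difference is in the sharpness part, where you build a rank-one M\"obius map adapted to the spectral decomposition of the given $z$ rather than of $u=f(0)$, which if anything matches the ``for each $z$'' formulation of the statement more directly.
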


This theorem will be proved below for all bounded symmetric domains.

\subsection{}

The following properties generalize to all bounded circled symmetric domains
results in \cite{LiuWang2007} for the differential at $u\in\Omega$ of an
automorphism $\phi\in\operatorname{Aut}\Omega$ such that $\phi(u)=0,$ and may
be of independent interest.

For $u\in V$, denote by $\tau_{u}$ the translation $z\mapsto z+u$ and by
$\widetilde{\tau}_{u}$ the rational map%
\[
\widetilde{\tau}_{u}(z)=z^{u},
\]
where $z^{u}$ is the \emph{quasi-inverse}
\[
z^{u}=B(z,u)^{-1}\left(  z-Q(z)u\right)  ,
\]
defined in the open set of points $z$ such that $B(z,u)=\operatorname{id}%
_{V}-D(z,u)+Q(x)Q(u)$ is invertible.

Recall that for $u\in\Omega$, the operator $B(u,u)$ is positive (with respect
to the Hermitian scalar product on $V$: $\left(  x\mid y\right)
=\operatorname{tr}D(x,y)$), so that $B(u,u)^{t}$ is well defined for
$t\in\mathbb{R}$. Let $u\in\Omega$ and consider a spectral decomposition%
\begin{align*}
u  &  =\lambda_{1}e_{1}+\cdots+\lambda_{r}e_{r},\\
1  &  >\lambda_{1}\geq\cdots\geq\lambda_{r}\geq0,
\end{align*}
where $\mathbf{e}=\left(  e_{1},\ldots,e_{r}\right)  $ is a frame. Then, by
(A11),
\[
B(u,u)=\sum_{0\leq i\leq j\leq r}\left(  1-\lambda_{i}^{2}\right)  \left(
1-\lambda_{j}^{2}\right)  p_{ij}%
\]
and $B(u,u)^{t}$ is given by
\begin{equation}
B(u,u)^{t}=\sum_{0\leq i\leq j\leq r}\left(  1-\lambda_{i}^{2}\right)
^{t}\left(  1-\lambda_{j}^{2}\right)  ^{t}p_{ij}, \label{eq2-0F}%
\end{equation}
where $(p_{ij})$ is the family of orthogonal projectors onto the subspaces of
the simultaneous Peirce decomposition with respect to the frame $\mathbf{e}%
=\left(  e_{1},\ldots,e_{r}\right)  $.

The following result is well known.

\begin{lemma}
\label{L1}For $u\in\Omega$, the map
\begin{equation}
\phi_{u}=\widetilde{\tau}_{u}\circ B(u,u)^{-1/2}\circ\tau_{-u} \label{eq2-01}%
\end{equation}
is an automorphism of $\Omega$ which sends $u$ to $0$. The derivative of
$\phi_{u}$ at $u$ is%
\begin{equation}
\operatorname{d}\phi_{u}(u)=B(u,u)^{-1/2}. \label{eq2-02}%
\end{equation}

\end{lemma}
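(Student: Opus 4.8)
The plan is to read $\phi_u$ as the composition of its three factors---the affine translation $\tau_{-u}$, the linear map $B(u,u)^{-1/2}$, and the quasi-inverse $\widetilde{\tau}_u$---and to settle the three assertions (that $\phi_u$ is an automorphism, that $\phi_u(u)=0$, and the value of $\operatorname{d}\phi_u(u)$) by examining each factor in turn. The operator $B(u,u)^{-1/2}$ is well defined and positive for $u\in\Omega$ by (\ref{eq2-0F}), and $\widetilde{\tau}_u$ is holomorphic on the open set where $B(z,u)$ is invertible, which contains $0$ since $B(0,u)=\operatorname{id}_V$.

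I would first dispose of $\phi_u(u)=0$, which is immediate: $\tau_{-u}(u)=0$; the linear map $B(u,u)^{-1/2}$ fixes $0$; and $\widetilde{\tau}_u(0)=0^{u}=B(0,u)^{-1}\bigl(0-Q(0)u\bigr)=0$, using $B(0,u)=\operatorname{id}_V$ and $Q(0)=0$.

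The claim that $\phi_u\in\operatorname{Aut}\Omega$ is the step I expect to be the genuine obstacle, and here I would not reprove the foundational material but invoke the Jordan-pair theory recalled in the appendix: the translations and quasi-inverses, together with the positive operators $B(u,u)^{\pm1/2}$, generate the Möbius transformations of $\Omega$, and a composition of exactly this shape is a transvection. Concretely, one exhibits a two-sided inverse of the mirror form $\tau_u\circ B(u,u)^{1/2}\circ\widetilde{\tau}_{-u}$ and checks, via the composition and fundamental identities for the quasi-inverse and the Bergman operator from the appendix, that the two maps compose to the identity and each carry $\Omega$ into $\Omega$; this is the place where those identities do all the work.

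Granting $\phi_u\in\operatorname{Aut}\Omega$, the derivative is a routine chain-rule computation once the single fact $\operatorname{d}\widetilde{\tau}_u(0)=\operatorname{id}_V$ is in hand. To see the latter, write $\widetilde{\tau}_u(z)=B(z,u)^{-1}\bigl(z-Q(z)u\bigr)$ and differentiate this product at $z=0$: the vector $z-Q(z)u$ vanishes at $z=0$, so the term carrying the derivative of $z\mapsto B(z,u)^{-1}$ drops out; since $z\mapsto Q(z)u$ is holomorphic and homogeneous of degree two its differential at $0$ vanishes, and $B(0,u)^{-1}=\operatorname{id}_V$, so $\operatorname{d}\widetilde{\tau}_u(0)=\operatorname{id}_V$. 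Because $\tau_{-u}(u)=0$ and $B(u,u)^{-1/2}$ sends $0$ to $0$, the chain rule then gives
\[
\operatorname{d}\phi_u(u)=\operatorname{d}\widetilde{\tau}_u(0)\circ B(u,u)^{-1/2}\circ\operatorname{d}\tau_{-u}(u)=\operatorname{id}_V\circ B(u,u)^{-1/2}\circ\operatorname{id}_V=B(u,u)^{-1/2},
\]
which is the asserted value.
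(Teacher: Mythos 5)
Your proposal is correct and follows essentially the same route as the paper: both treat $\phi_u$ factor by factor, defer the automorphism claim to the foundational Jordan-theoretic literature (the paper cites \cite[Proposition 9.8 (1)]{Loos1977} for the mirror map $\psi_u=\tau_u\circ B(u,u)^{1/2}\circ\widetilde{\tau}_{-u}$, exactly the two-sided inverse you describe), and obtain the derivative by the chain rule from the fact that $\operatorname{d}\widetilde{\tau}_u(0)=\operatorname{id}_V$. The only differences are cosmetic and both in your favor for self-containedness: you verify $\phi_u(u)=0$ directly rather than via $\psi_u(0)=u$, and you derive $\operatorname{d}\widetilde{\tau}_u(0)=\operatorname{id}_V$ from the defining formula for the quasi-inverse (correctly, since $z-Q(z)u$ vanishes at $0$ and $Q(z)u$ is quadratic) where the paper instead quotes the general identity $\operatorname{d}\widetilde{\tau}_u(z)=B(z,u)^{-1}$ from \cite{Roos2000}.
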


\begin{proof}
Let
\[
\psi_{u}=\tau_{u}\circ B(u,u)^{1/2}\circ\widetilde{\tau}_{-u}.
\]
Then, by \cite[Proposition 9.8 (1)]{Loos1977}, $\psi_{u}$ is an automorphism
of $\Omega$. As $\widetilde{\tau}_{-u}(0)=0$ and $B(u,u)^{1/2}$ is linear, we
have $\psi_{u}(0)=u$. The inverse of $\psi_{u}$ is
\[
\phi_{u}=\widetilde{\tau}_{u}\circ B(u,u)^{-1/2}\circ\tau_{-u}%
\]
and $\phi_{u}(u)=0$. The derivative of $\widetilde{\tau}_{u}$ is
\[
\operatorname{d}\widetilde{\tau}_{u}(z)=B(z,u)^{-1}%
\]
(see \cite[Proposition III.4.1 (ii)]{Roos2000}). As $B(0,u)=\operatorname{id}%
_{V}$, it follows that
\[
\operatorname{d}\phi_{u}(u)=B(u,u)^{-1/2}.
\]

\end{proof}

Denote by $\left\Vert ~\right\Vert $ the Hermitian norm on $V$ ($\left\Vert
z\right\Vert ^{2}=\operatorname{tr}D(z,z)$) and by $\left\Vert ~\right\Vert $
the associated operator norm for linear endomorphisms of $V$. The
\emph{spectral norm} $\left\Vert ~\right\Vert _{\Omega}$ on $V$ is given by
\[
\left\Vert z\right\Vert _{\Omega}^{2}=\frac{1}{2}\left\Vert D(z,z)\right\Vert
=\left\Vert Q(z)\right\Vert ;
\]
the unit ball of this norm is $\Omega$ (see the appendix). We denote also by
$\left\Vert ~\right\Vert _{\Omega}$ the associated operator norm for linear
endomorphisms of $V$.

\begin{lemma}
\label{L4}For $u\in\Omega$, $t\in\mathbb{R}$,%
\begin{equation}
\left\Vert B(u,u)^{t}\right\Vert _{\Omega}=\left\Vert B(u,u)^{t}\right\Vert .
\label{eq2-0D}%
\end{equation}

\end{lemma}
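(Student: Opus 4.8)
The plan is to establish the two inequalities $\left\Vert B(u,u)^{t}\right\Vert _{\Omega}\geq\left\Vert B(u,u)^{t}\right\Vert $ and $\left\Vert B(u,u)^{t}\right\Vert _{\Omega}\leq\left\Vert B(u,u)^{t}\right\Vert $ separately. Throughout write $g=B(u,u)^{t}$. By (\ref{eq2-0F}) this is a self-adjoint positive operator for the Hermitian inner product, diagonalized by the orthogonal Peirce projectors $p_{ij}$ with eigenvalues $\mu_{ij}=\left(1-\lambda_{i}^{2}\right)^{t}\left(1-\lambda_{j}^{2}\right)^{t}>0$. Since $g$ is self-adjoint and positive, its Hermitian operator norm equals its largest eigenvalue, $\left\Vert g\right\Vert =\max_{0\leq i\leq j\leq r}\mu_{ij}$, attained on some nonzero Peirce space $V_{i_{0}j_{0}}$.

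For the lower bound I would simply pick any $z\neq0$ in $V_{i_{0}j_{0}}$. Then $gz=\mu_{i_{0}j_{0}}z=\left\Vert g\right\Vert z$, and since $\left\Vert g\right\Vert $ is a nonnegative real scalar we get $\left\Vert gz\right\Vert _{\Omega}=\left\Vert g\right\Vert \left\Vert z\right\Vert _{\Omega}$, whence $\left\Vert g\right\Vert _{\Omega}\geq\left\Vert g\right\Vert $.

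For the upper bound I would exploit the transformation law of the quadratic representation $Q$ under $g$. The fundamental Bergman identity gives $Q\bigl(B(u,u)z\bigr)=B(u,u)\,Q(z)\,B(u,u)$, so $B(u,u)$ lies in the structure group and coincides with its own structure-adjoint; being self-adjoint and positive, and the structure group being stable under the Hermitian adjoint, all real powers $g=B(u,u)^{t}$ remain in the structure group and satisfy the same identity $Q(gz)=g\,Q(z)\,g$. Combining this with $\left\Vert w\right\Vert _{\Omega}^{2}=\left\Vert Q(w)\right\Vert $ and submultiplicativity of the Hermitian operator norm yields
\[
\left\Vert gz\right\Vert _{\Omega}^{2}=\left\Vert Q(gz)\right\Vert =\left\Vert g\,Q(z)\,g\right\Vert \leq\left\Vert g\right\Vert ^{2}\left\Vert Q(z)\right\Vert =\left\Vert g\right\Vert ^{2}\left\Vert z\right\Vert _{\Omega}^{2},
\]
whence $\left\Vert g\right\Vert _{\Omega}\leq\left\Vert g\right\Vert $. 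Together with the lower bound this gives (\ref{eq2-0D}).

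The main obstacle is the passage from the case $t=1$, where $Q\bigl(B(u,u)z\bigr)=B(u,u)Q(z)B(u,u)$ holds by the Bergman identity (and from it all integer exponents by iteration), to arbitrary real $t$. I expect to justify this through the polar structure of the structure group $\operatorname{Str}(V)$: it is a real reductive group stable under $g\mapsto g^{\ast}$, and $B(u,u)^{t}=\exp\bigl(t\log B(u,u)\bigr)$ is a one-parameter subgroup of positive self-adjoint elements of $\operatorname{Str}(V)$, on which the structure-adjoint involution acts as a continuous homomorphism fixing $B(u,u)$ and hence fixing every $B(u,u)^{t}$. Alternatively one could verify $Q(gz)=g\,Q(z)\,g$ directly from the simultaneous Peirce multiplication rules, using that $g$ acts on $V_{ij}$ by the scalar $\left(1-\lambda_{i}^{2}\right)^{t}\left(1-\lambda_{j}^{2}\right)^{t}$ and checking its compatibility with the action of $Q(z)$ on the Peirce grading.
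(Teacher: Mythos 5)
Your proposal is correct, and the two bounding inequalities are organized exactly as in the paper: the lower bound by evaluating on an eigenvector for the top eigenvalue (your version is in fact more explicit than the paper's one-line remark), and the upper bound from the identity $Q\bigl(B(u,u)^{t}z\bigr)=B(u,u)^{t}Q(z)B(u,u)^{t}$ combined with $\left\Vert w\right\Vert _{\Omega}^{2}=\left\Vert Q(w)\right\Vert $ and submultiplicativity. Where you genuinely diverge is in how that identity is established for arbitrary real $t$. You invoke the polar structure of $\operatorname{Str}(V)$ (real powers of a positive self-adjoint element of an algebraic group stable under adjoint stay in the group, and the structure adjoint must be identified with the Hermitian adjoint so that $\bigl(B(u,u)^{t}\bigr)^{\sharp}=B(u,u)^{t}$), or alternatively a direct check against the Peirce multiplication rules; both can be completed, but the first imports nontrivial facts about self-adjoint algebraic groups and the second is a case-by-case computation. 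The paper's device is more elementary and self-contained: for $t\geq0$ one has $0<\bigl(1-\lambda_{i}^{2}\bigr)^{t}\leq1$, so one can set $\mu_{i}^{2}=1-\bigl(1-\lambda_{i}^{2}\bigr)^{t}$ and $w=\mu_{1}e_{1}+\cdots+\mu_{r}e_{r}$; then (A11) gives $B(u,u)^{t}=B(w,w)$, so the identity is just (A5) applied to $w$, and the case $t<0$ follows by substituting $z\mapsto B(u,u)^{-t}z$ and conjugating. What your approach buys is generality (it shows the identity holds for real powers of any positive self-adjoint structure-group element, not only Bergman operators); what the paper's buys is a two-line proof using only the identities already quoted in its appendix.
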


That is, the operator $B(u,u)^{t}$ has the same operator norm with respect to
the spectral norm or to the Hermitian norm.

\begin{proof}
Let $u\in\Omega$ and consider a spectral decomposition%
\begin{align*}
u  &  =\lambda_{1}e_{1}+\cdots+\lambda_{r}e_{r},\\
1  &  >\lambda_{1}\geq\cdots\geq\lambda_{r}\geq0,
\end{align*}
where $\mathbf{e}=\left(  e_{1},\ldots,e_{r}\right)  $ is a frame. Then, by
(A11),
\[
B(u,u)=\sum_{0\leq i\leq j\leq r}\left(  1-\lambda_{i}^{2}\right)  \left(
1-\lambda_{j}^{2}\right)  p_{ij}%
\]
and
\[
B(u,u)^{t}=\sum_{0\leq i\leq j\leq r}\left(  1-\lambda_{i}^{2}\right)
^{t}\left(  1-\lambda_{j}^{2}\right)  ^{t}p_{ij},
\]
where $(p_{ij})$ is the family of orthogonal projectors onto the subspaces of
the simultaneous Peirce decomposition with respect to the frame $\mathbf{e}%
=\left(  e_{1},\ldots,e_{r}\right)  $.

For $t\geq0$, let $\mu_{j}\geq0$ be defined by%
\[
\left(  1-\lambda_{i}^{2}\right)  ^{t}=1-\mu_{j}^{2}%
\]
and define
\[
w=\mu_{1}e_{1}+\cdots+\mu_{r}e_{r}.
\]
Then, again by (A11),
\[
B(u,u)^{t}=B(w,w).
\]
Using (A5), we get for $t\geq0$ and all $z\in V$%
\begin{equation}
Q(B(u,u)^{t}z)=B(u,u)^{t}Q(z)B(u,u)^{t}. \label{eq2-0A}%
\end{equation}
This implies
\begin{equation}
Q(B(u,u)^{-t}z)=B(u,u)^{-t}Q(z)B(u,u)^{-t}, \label{eq2-0B}%
\end{equation}
so that (\ref{eq2-0A}) is finally valid for all $t\in\mathbb{R}$.

Now, by (A12), for $t\in\mathbb{R}$, $z\in V$,
\[
\left\Vert B(u,u)^{t}z\right\Vert _{\Omega}^{2}=\left\Vert Q(B(u,u)^{t}%
z)\right\Vert \leq\left\Vert B(u,u)^{t}\right\Vert ^{2}\left\Vert
Q(z)\right\Vert =\left\Vert B(u,u)^{t}\right\Vert ^{2}\left\Vert z\right\Vert
_{\Omega}^{2}.
\]
This implies%
\[
\left\Vert B(u,u)^{t}\right\Vert _{\Omega}\leq\left\Vert B(u,u)^{t}\right\Vert
.
\]
Recall that
\[
B(u,u)^{t}=\sum_{0\leq i\leq j\leq r}\left(  1-\lambda_{i}^{2}\right)
^{t}\left(  1-\lambda_{j}^{2}\right)  ^{t}p_{ij},
\]
where the $p_{ij}$'s are orthogonal projections with respect to the Hermitian
metric. Then $\left\Vert B(u,u)^{t}\right\Vert $ is the greatest eigenvalue of
$B(u,u)^{t}$, and $\left\Vert B(u,u)^{t}\right\Vert _{\Omega}\geq\left\Vert
B(u,u)^{t}\right\Vert .$
\end{proof}

For $u\in\Omega$ with spectral decomposition $u=\lambda_{1}e_{1}%
+\cdots+\lambda_{r}e_{r}$, $1>\lambda_{1}\geq\cdots\geq\lambda_{r}\geq0$,
denote by $\beta(u)$ the greatest eigenvalue of $B(u,u).$

\begin{lemma}
\label{L6} Let $\Omega$ be a bounded \emph{irreducible} symmetric domain with
invariants $a,b,r$. For $u\in\Omega$ with spectral decomposition
$u=\lambda_{1}e_{1}+\cdots+\lambda_{r}e_{r}$, $1>\lambda_{1}\geq\cdots
\geq\lambda_{r}\geq0$, the value of $\beta(u)$ is

\begin{enumerate}
\item if $\Omega$ is the unit disc of $\mathbb{C}$ ($b=0$, $r=1$),
\[
\beta(u)=\left(  1-\left\Vert u\right\Vert _{\Omega}^{2}\right)  ^{2};
\]

\item if $\Omega$ is the Hermitian ball of dimension $n>1$ ($b=n-1$, $r=1$),%
\[
\beta(u)=1-\left\Vert u\right\Vert _{\Omega}^{2};
\]

\item if $\Omega$ is of tube type ($b=0$),
\[
\beta(u)=\left(  1-\lambda_{r}^{2}\right)  ^{2};
\]

\item if $\Omega$ is of non tube type ($b>0$),
\[
\beta(u)=1-\lambda_{r}^{2}.
\]

\end{enumerate}
\end{lemma}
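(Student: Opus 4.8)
The plan is to read the eigenvalues of $B(u,u)$ directly off its Peirce-diagonal expression and then maximise. By the formula recalled above (via (A11)),
\[
B(u,u)=\sum_{0\leq i\leq j\leq r}\left(1-\lambda_{i}^{2}\right)\left(1-\lambda_{j}^{2}\right)p_{ij},
\]
with the convention $\lambda_{0}=0$, where the $p_{ij}$ are orthogonal projectors onto mutually orthogonal Peirce subspaces $V_{ij}$ whose direct sum is $V$. Hence $B(u,u)$ is already diagonalised: its eigenvalues are exactly the scalars $\left(1-\lambda_{i}^{2}\right)\left(1-\lambda_{j}^{2}\right)$ for those pairs $(i,j)$ with $V_{ij}\neq\{0\}$, and $\beta(u)$ is the largest of them. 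Everything thus reduces to deciding which Peirce spaces are nonzero and then choosing the admissible pair that maximises the product.

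First I would recall the Peirce multiplicities attached to the invariants $(a,b,r)$: $\dim V_{jj}=1$ for $1\leq j\leq r$, $\dim V_{ij}=a$ for $1\leq i<j\leq r$, $\dim V_{0j}=b$ for $1\leq j\leq r$, and $\dim V_{00}=0$. Two consequences govern the whole computation. The space $V_{00}$ is absent, so the coefficient $1$ (the product over two zero-indices) never occurs as an eigenvalue. And the spaces $V_{0j}$ carrying a single zero-index are present exactly when $b>0$, i.e.\ precisely in the non-tube case; this is the one structural fact separating cases (3)/(1) from (4)/(2).

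Next I would maximise. Since $1>\lambda_{1}\geq\cdots\geq\lambda_{r}\geq0$, the factors increase, $0<1-\lambda_{1}^{2}\leq\cdots\leq1-\lambda_{r}^{2}\leq1$, so the largest factor attached to a positive index is $1-\lambda_{r}^{2}$, while the index $0$ contributes the factor $1$. If $b=0$ no zero-index is available; every admissible coefficient is a product of two factors, each at most $1-\lambda_{r}^{2}$, and the maximum $\left(1-\lambda_{r}^{2}\right)^{2}$ is attained at $i=j=r$ on the always-present space $V_{rr}$, giving case (3). If $b>0$ the space $V_{0r}$ is present with coefficient $1\cdot\left(1-\lambda_{r}^{2}\right)=1-\lambda_{r}^{2}$; using $t\geq t^{2}$ for $t=1-\lambda_{r}^{2}\in(0,1]$ one checks this dominates every coefficient coming from a positive-index pair, giving case (4). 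Cases (1) and (2) are then the specialisations $r=1$, where $u=\lambda_{1}e_{1}$ and $\Vert u\Vert_{\Omega}=\lambda_{1}=\lambda_{r}$, so (3) reads $\left(1-\Vert u\Vert_{\Omega}^{2}\right)^{2}$ and (4) reads $1-\Vert u\Vert_{\Omega}^{2}$.

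The only genuinely non-routine ingredient is the list of Peirce multiplicities, and in particular the facts $\dim V_{00}=0$ and $\dim V_{0j}=b$; once these are in hand the maximisation is elementary. I expect the main obstacle to be pure bookkeeping: checking that the convention $\lambda_{0}=0$ is consistent with (A11), and confirming that \emph{tube type} is correctly identified with $b=0$, so that the availability of the factor-$1$ index $0$ matches the case split asserted in the statement.
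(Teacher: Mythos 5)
Your proposal is correct and follows essentially the same route as the paper: the paper's proof likewise reads the eigenvalues of $B(u,u)$ off the Peirce-diagonal formula (A11) — namely $(1-\lambda_i^2)^2$, $(1-\lambda_i^2)(1-\lambda_j^2)$, and, only in the non-tube case, $1-\lambda_i^2$ — and takes the maximum. You merely make explicit the Peirce multiplicities ($\dim V_{00}=0$, $\dim V_{0j}=b$) that the paper leaves implicit.
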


\begin{proof}
The lemma follows from the fact that the eigenvalues of $B(u,u)$ are:
$(1-\lambda_{i}^{2})^{2}$ ($1\leq i\leq r$), $(1-\lambda_{i}^{2}%
)(1-\lambda_{j}^{2})$ ($1\leq i<j\leq r$) and \emph{only in the non tube case}
$1-\lambda_{i}^{2}$ ($1\leq i\leq r$).
\end{proof}

Note that $\lambda_{1}=\lambda_{r}$ occurs if and only if $u$ is a scalar
multiple of a \emph{maximal tripotent} (an element of the Shilov boundary of
$\Omega$). In this case, one has $\beta(u)=\left(  1-\left\Vert u\right\Vert
_{\Omega}^{2}\right)  ^{2}$ or $\beta(u)=1-\left\Vert u\right\Vert _{\Omega
}^{2}$, depending on whether the domain $\Omega$ is of tube type or not.

\begin{proposition}
\label{L3}Let $\Omega$ be a bounded circled symmetric domain. For $u\in\Omega$
we have
\begin{align}
\left\Vert B(u,u)^{-1/2}\right\Vert _{\Omega}  &  =\frac{1}{1-\left\Vert
u\right\Vert _{\Omega}^{2}},\label{eq2-04}\\
\left\Vert B(u,u)^{-1/2}u\right\Vert _{\Omega}  &  =\frac{\left\Vert
u\right\Vert _{\Omega}}{1-\left\Vert u\right\Vert _{\Omega}^{2}}%
,\label{eq2-05}\\
\left\Vert B(u,u)^{1/2}\right\Vert _{\Omega}  &  =\beta(u)^{1/2}.
\label{eq2-09}%
\end{align}
For any automorphism $\phi\in\operatorname{Aut}\Omega$ such that $\phi(u)=0$,
\begin{align}
\left\Vert \operatorname{d}\phi(u)\right\Vert _{\Omega}  &  =\frac
{1}{1-\left\Vert u\right\Vert _{\Omega}^{2}},\label{eq2-03}\\
\left\Vert \operatorname{d}\phi(u)u\right\Vert _{\Omega}  &  =\frac{\left\Vert
u\right\Vert _{\Omega}}{1-\left\Vert u\right\Vert _{\Omega}^{2}}%
\label{eq2-0E}\\
\left\Vert \left(  \operatorname{d}\phi(u)\right)  ^{-1}\right\Vert _{\Omega}
&  =\beta(u)^{1/2}. \label{eq2-06}%
\end{align}

\end{proposition}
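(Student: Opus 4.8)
The plan is to establish the three identities (\ref{eq2-04}), (\ref{eq2-05}) and (\ref{eq2-09}) for the operators $B(u,u)^{\pm1/2}$ first, and then to deduce the three statements about an arbitrary automorphism by comparison with the canonical automorphism $\phi_{u}$ of Lemma \ref{L1}.

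For (\ref{eq2-04}) and (\ref{eq2-09}), I would invoke Lemma \ref{L4} to replace the spectral operator norm $\Vert\cdot\Vert_{\Omega}$ by the Hermitian operator norm $\Vert\cdot\Vert$, which for the positive self-adjoint operator $B(u,u)^{t}$ is simply its largest eigenvalue. Reading the eigenvalues off the expansion (\ref{eq2-0F}) (with the convention $\lambda_{0}=0$), the eigenvalues of $B(u,u)$ are the products $(1-\lambda_{i}^{2})(1-\lambda_{j}^{2})$, and since $\Vert u\Vert_{\Omega}=\lambda_{1}$ the smallest of these is $(1-\lambda_{1}^{2})^{2}=(1-\Vert u\Vert_{\Omega}^{2})^{2}$. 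Hence the largest eigenvalue of $B(u,u)^{-1/2}$ is $1/(1-\Vert u\Vert_{\Omega}^{2})$, which is (\ref{eq2-04}); and the largest eigenvalue of $B(u,u)^{1/2}$ is $\beta(u)^{1/2}$ by the very definition of $\beta(u)$, which is (\ref{eq2-09}).

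For (\ref{eq2-05}) the essential point is to compute $B(u,u)^{-1/2}u$ explicitly. Each frame vector $e_{i}$ lies in the Peirce subspace $V_{ii}$ of the joint Peirce decomposition, on which $B(u,u)$ acts as the scalar $(1-\lambda_{i}^{2})^{2}$; therefore $B(u,u)^{-1/2}e_{i}=(1-\lambda_{i}^{2})^{-1}e_{i}$ and $B(u,u)^{-1/2}u=\sum_{i}\frac{\lambda_{i}}{1-\lambda_{i}^{2}}e_{i}$. Since $t\mapsto t/(1-t^{2})$ is increasing on $[0,1)$, this is again a spectral decomposition, with largest coefficient $\lambda_{1}/(1-\lambda_{1}^{2})$; the spectral norm therefore equals $\Vert u\Vert_{\Omega}/(1-\Vert u\Vert_{\Omega}^{2})$.

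Finally, for a general $\phi\in\operatorname{Aut}\Omega$ with $\phi(u)=0$, I would write $\phi=L\circ\phi_{u}$ with $L=\phi\circ\phi_{u}^{-1}$ fixing the origin. As $\Omega$ is circled, automorphisms fixing the origin are linear (H.~Cartan), so $L$ is linear and hence a $\Vert\cdot\Vert_{\Omega}$-isometry (a linear automorphism of $\Omega$ preserves its Minkowski norm), and so is $L^{-1}$. Then $\operatorname{d}\phi(u)=L\circ\operatorname{d}\phi_{u}(u)=L\circ B(u,u)^{-1/2}$, and composing with the isometries $L$ and $L^{-1}$ leaves the three norms unchanged, so (\ref{eq2-03}), (\ref{eq2-0E}), (\ref{eq2-06}) reduce respectively to (\ref{eq2-04}), (\ref{eq2-05}), (\ref{eq2-09}). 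I expect the main obstacle to be the explicit computation behind (\ref{eq2-05}): one must correctly locate the frame components of $u$ in the Peirce pieces $V_{ii}$ and verify the monotonicity that makes the spectral norm select precisely the $\lambda_{1}$-term; the remaining identities reduce cleanly to Lemma \ref{L4} and to the linearity of origin-fixing automorphisms.
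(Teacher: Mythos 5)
Your proof is correct and follows essentially the same route as the paper: Lemma \ref{L4} reduces (\ref{eq2-04}) and (\ref{eq2-09}) to eigenvalue computations for the positive operator $B(u,u)^{t}$, an explicit Peirce computation gives (\ref{eq2-05}), and the general case reduces to $\phi_{u}$ by writing $\phi=k\circ\phi_{u}$ with $k$ a linear automorphism. Your formula $B(u,u)^{-1/2}u=\sum_{i}\frac{\lambda_{i}}{1-\lambda_{i}^{2}}e_{i}$ is in fact the correct one (the paper's proof displays $\left(1-\lambda_{i}^{2}\right)^{1/2}$ in the denominator, an evident typo, since $B(u,u)$ acts on $V_{ii}$ as $\left(1-\lambda_{i}^{2}\right)^{2}$), and your explicit appeals to the monotonicity of $t\mapsto t/(1-t^{2})$ and to Cartan's linearity theorem only make the argument more complete.
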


\begin{proof}
Let $u=\lambda_{1}e_{1}+\cdots+\lambda_{r}e_{r}$, $1>\lambda_{1}\geq\cdots
\geq\lambda_{r}\geq0$ be a spectral decomposition of $u$. The operator norm
$\left\Vert B(u,u)^{-1/2}\right\Vert $ with respect to the Hermitian product
is its greatest eigenvalue $\left(  1-\lambda_{1}^{2}\right)  ^{-1}=\left(
1-\left\Vert u\right\Vert _{\Omega}^{2}\right)  ^{-1}$; then Lemma \ref{L4}
implies (\ref{eq2-04}). In the same way, Lemma \ref{L4} implies (\ref{eq2-09}%
). We have%
\[
B(u,u)^{-1/2}u=\sum_{i=1}^{r}\frac{\lambda_{i}}{\left(  1-\lambda_{i}%
^{2}\right)  ^{1/2}}e_{i},
\]
hence
\[
\left\Vert B(u,u)^{-1/2}u\right\Vert _{\Omega}=\frac{\lambda_{1}}{\left(
1-\lambda_{1}^{2}\right)  ^{1/2}}%
\]
and (\ref{eq2-05}).

Let $\phi$ be any automorphism $\phi\in\operatorname{Aut}\Omega$ such that
$\phi(u)=0$. Then $\phi=k\circ\phi_{u}$, where $k$ is a linear automorphism of
$\Omega$; the spectral norm is invariant by $k$ and
\begin{align*}
\left\Vert \operatorname{d}\phi(u)\right\Vert _{\Omega}  &  =\left\Vert k\circ
B(u,u)^{-1/2}\right\Vert _{\Omega}=\left\Vert B(u,u)^{-1/2}\right\Vert
_{\Omega},\\
\left\Vert \operatorname{d}\phi(u)u\right\Vert _{\Omega}  &  =\left\Vert
k\circ B(u,u)^{-1/2}u\right\Vert _{\Omega}=\left\Vert B(u,u)^{-1/2}%
u\right\Vert _{\Omega},\\
\left\Vert \left(  \operatorname{d}\phi(u)\right)  ^{-1}\right\Vert _{\Omega}
&  =\left\Vert B(u,u)^{1/2}\right\Vert _{\Omega}.
\end{align*}

\end{proof}

\subsection{}

Let $\Omega$ be a bounded circled symmetric domain. If $f:\Omega
\rightarrow\Omega$ is a holomorphic map, denote by
\[
f(z)=\sum_{k=0}^{\infty}f_{k}(z)
\]
its Taylor expansion in $k$-homogeneous polynomials $f_{k}$. The following
lemma generalizes Lemma \ref{Wiener} to bounded symmetric domains.

\begin{lemma}
\label{L5}Let $u=f(0)$ and let $\phi\in\operatorname{Aut}\Omega$ such that
$\phi(u)=0$. Then%
\begin{equation}
\left\Vert \operatorname{d}\phi(u)\cdot f_{k}(z)\right\Vert _{\Omega}%
\leq\left\Vert z\right\Vert _{\Omega}^{k}. \label{eq2-0C}%
\end{equation}

\end{lemma}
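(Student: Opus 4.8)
The plan is to lift, essentially verbatim, the one-dimensional argument used above to prove Lemma \ref{Wiener}, with $u=f(0)$ playing the role of $a_{0}$. The only genuinely new ingredients are the convexity of $\Omega$ (needed to run the averaging step) and a Schwarz lemma for the ball of the spectral norm. Since $f_{k}$ is $k$-homogeneous, I restrict to $k\geq1$, the range in which Lemma \ref{Wiener} is stated. As $f_{k}(\lambda z)=\lambda^{k}f_{k}(z)$ and $\left\Vert \lambda z\right\Vert _{\Omega}=\left\vert \lambda\right\vert \left\Vert z\right\Vert _{\Omega}$, both sides of (\ref{eq2-0C}) are positively homogeneous of degree $k$ in $z$; hence it suffices to fix $z$ with $\left\Vert z\right\Vert _{\Omega}=1$ and prove $\left\Vert \operatorname{d}\phi(u)\cdot f_{k}(z)\right\Vert _{\Omega}\leq1$. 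For such $z$ one has $\left\Vert \zeta z\right\Vert _{\Omega}=\left\vert \zeta\right\vert <1$, so $\zeta z\in\Omega$ for $\zeta\in\Delta$, and $F(\zeta)=f(\zeta z)=\sum_{m\geq0}\zeta^{m}f_{m}(z)$ is a holomorphic map $\Delta\rightarrow\Omega$.

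The heart of the argument is to extract the degree-$k$ part of $F$ while preserving its value at the origin. Averaging over the $k$-th roots of unity exactly as in the proof above, I set $\widetilde{g}_{k}(\zeta)=\frac{1}{k}\sum_{j=1}^{k}f(\operatorname{e}^{2\operatorname{i}\pi j/k}\zeta z)$; by orthogonality of characters this equals $\sum_{l\geq0}\zeta^{kl}f_{kl}(z)$, a function of $\zeta^{k}$ alone. Because $\Omega$ is the open unit ball of the \emph{norm} $\left\Vert \ \right\Vert _{\Omega}$, it is convex, so $\widetilde{g}_{k}(\zeta)$, being a convex combination of points $f(\operatorname{e}^{2\operatorname{i}\pi j/k}\zeta z)\in\Omega$, again lies in $\Omega$. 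Reparametrizing by $w=\zeta^{k}$ (which maps $\Delta$ onto $\Delta$), I obtain a genuine holomorphic map $h_{k}(w)=\sum_{l\geq0}w^{l}f_{kl}(z):\Delta\rightarrow\Omega$ with $h_{k}(0)=f_{0}(z)=f(0)=u$ and $h_{k}^{\prime}(0)=f_{k}(z)$. Thus $u$ is preserved, which is precisely what kills the cross terms and allows a reduction to ``order one,'' as in the scalar case.

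Then I would compose with the automorphism: $\Phi=\phi\circ h_{k}:\Delta\rightarrow\Omega$ is holomorphic with $\Phi(0)=\phi(u)=0$ and, by the chain rule, $\Phi^{\prime}(0)=\operatorname{d}\phi(u)\cdot f_{k}(z)$. It remains to invoke the Schwarz lemma in the form: for any holomorphic $\Phi:\Delta\rightarrow\Omega$ with $\Phi(0)=0$ one has $\left\Vert \Phi^{\prime}(0)\right\Vert _{\Omega}\leq1$. This follows from the scalar Schwarz lemma by Hahn--Banach: choosing a linear form $\ell$ on $V$ of dual spectral norm $1$ with $\ell(\Phi^{\prime}(0))=\left\Vert \Phi^{\prime}(0)\right\Vert _{\Omega}$, the composite $\ell\circ\Phi:\Delta\rightarrow\Delta$ fixes $0$, whence $\left\vert \ell(\Phi^{\prime}(0))\right\vert =\left\vert (\ell\circ\Phi)^{\prime}(0)\right\vert \leq1$. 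This yields $\left\Vert \operatorname{d}\phi(u)\cdot f_{k}(z)\right\Vert _{\Omega}\leq1$, and the homogeneity reduction gives (\ref{eq2-0C}) for all $z$.

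I expect the two facts used as black boxes to be where care is needed. The first is the convexity of $\Omega$: this is exactly why the Harish--Chandra (circled) realization, the unit ball of a bona fide norm, is the right model, and it is what makes the averaging step legitimate in arbitrary dimension. The second is the Schwarz lemma, which must be stated for the spectral-norm ball $\Omega$ rather than for the Hermitian ball; the Hahn--Banach reduction to the scalar case handles this cleanly. Everything else is bookkeeping that carries the classical passage from degree one to degree $k$ over to the vector-valued setting, with the automorphism $\phi$ supplying, through $\operatorname{d}\phi(u)$, the factor $\tfrac{1}{1-\left\Vert u\right\Vert _{\Omega}^{2}}$ of Proposition \ref{L3} that makes this the correct analogue of $\left\vert a_{k}\right\vert \leq1-\left\vert a_{0}\right\vert ^{2}$.
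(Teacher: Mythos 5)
Your proof is correct and follows essentially the same route as the paper: the key step in both is F.~Wiener's averaging of $f$ over the $k$-th roots of unity (legitimate because $\Omega$, as the open unit ball of the spectral norm, is convex) followed by composition with $\phi$, which isolates $\operatorname{d}\phi(u)\cdot f_{k}(z)$ as the lowest-degree term. The only difference is in the finish: you restrict to the disc $\zeta\mapsto\zeta z$, reparametrize by $w=\zeta^{k}$, and invoke a Hahn--Banach norming functional plus the scalar Schwarz lemma, whereas the paper stays in several variables and bounds the Fourier-coefficient integral $\int_{0}^{1}\phi\circ g_{k}(\operatorname{e}^{2\operatorname{i}\pi t}z)\operatorname{e}^{-2\operatorname{i}\pi kt}\operatorname{d}t$ directly by the triangle inequality.
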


\begin{proof}
For $k>1$, consider $g_{k}:\Omega\rightarrow\Omega$ defined by
\[
g_{k}(z)=\frac{1}{k}\sum_{j=1}^{k}f\left(  \operatorname{e}^{2\operatorname{i}%
\pi j/k}z\right)  =\sum_{m=0}^{\infty}f_{mk}(z).
\]
Let $h_{k}=\phi\circ g_{k}$. Then
\begin{align*}
h_{k}(z)  &  =\phi\left(  \sum_{m=0}^{\infty}f_{mk}(z)\right) \\
&  =\phi(u)+\operatorname{d}\phi(u)\cdot\left(  \sum_{m=1}^{\infty}%
f_{mk}(z)\right)  +\cdots=\operatorname{d}\phi(u)\cdot f_{k}(z)+\cdots
\end{align*}
and, for $z\in\Omega$,
\[
\operatorname{d}\phi(u)\cdot f_{k}(z)=\int_{0}^{1}\phi\circ g_{k}\left(
\operatorname{e}^{2\operatorname{i}\pi t}z\right)  \operatorname{e}%
^{-2\operatorname{i}\pi kt}\operatorname{d}t.
\]
This implies
\[
\left\Vert \operatorname{d}\phi(u)\cdot f_{k}(z)\right\Vert _{\Omega}\leq1
\]
for $z\in\Omega$, and by homogeneity of $f_{k}$,
\[
\left\Vert \operatorname{d}\phi(u)\cdot f_{k}(z)\right\Vert _{\Omega}%
\leq\left\Vert z\right\Vert _{\Omega}^{k}%
\]
for $z\in V$.
\end{proof}

\begin{proposition}
\label{P1}Let $\Omega$ be a bounded circled symmetric domain. Let $u\in\Omega$
and let $\phi\in\operatorname{Aut}\Omega$ such that $\phi(u)=0$.

Then%
\begin{equation}
\sum_{k=0}^{\infty}\frac{\left\Vert \operatorname{d}\phi(u)\cdot
f_{k}(z)\right\Vert _{\Omega}}{\left\Vert \operatorname{d}\phi(u)\right\Vert
_{\Omega}}<1 \label{eq2-10}%
\end{equation}
for all $f:\Omega\rightarrow\Omega$ such that $f(0)=u$ and for all $z$ such
that $\left\Vert z\right\Vert _{\Omega}\leq\frac{1}{2+\left\Vert u\right\Vert
_{\Omega}}$.
\end{proposition}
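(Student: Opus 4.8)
The plan is to split the series at $k=0$, compute that term exactly, and dominate the remaining terms by a geometric series coming from Lemma \ref{L5}.

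First I would treat the constant term. Since $f_{0}\equiv u$, the $k=0$ summand is
\[
\frac{\left\Vert \operatorname{d}\phi(u)\cdot u\right\Vert _{\Omega}}{\left\Vert \operatorname{d}\phi(u)\right\Vert _{\Omega}},
\]
and Proposition \ref{L3} evaluates the numerator by (\ref{eq2-0E}) as $\left\Vert u\right\Vert _{\Omega}/(1-\left\Vert u\right\Vert _{\Omega}^{2})$ and the denominator by (\ref{eq2-03}) as $1/(1-\left\Vert u\right\Vert _{\Omega}^{2})$, so this term equals exactly $\left\Vert u\right\Vert _{\Omega}$. For $k\ge1$, Lemma \ref{L5} gives $\left\Vert \operatorname{d}\phi(u)\cdot f_{k}(z)\right\Vert _{\Omega}\le\left\Vert z\right\Vert _{\Omega}^{k}$, and dividing once more by $\left\Vert \operatorname{d}\phi(u)\right\Vert _{\Omega}=1/(1-\left\Vert u\right\Vert _{\Omega}^{2})$ bounds the $k$-th summand by $(1-\left\Vert u\right\Vert _{\Omega}^{2})\left\Vert z\right\Vert _{\Omega}^{k}$.

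Writing $a=\left\Vert u\right\Vert _{\Omega}$ and $\rho=\left\Vert z\right\Vert _{\Omega}$ and summing the geometric series, I obtain
\[
\sum_{k=0}^{\infty}\frac{\left\Vert \operatorname{d}\phi(u)\cdot f_{k}(z)\right\Vert _{\Omega}}{\left\Vert \operatorname{d}\phi(u)\right\Vert _{\Omega}}\le a+(1-a^{2})\frac{\rho}{1-\rho}.
\]
The right-hand side is strictly increasing in $\rho$, and at $\rho=\frac{1}{2+a}$ one has $\frac{\rho}{1-\rho}=\frac{1}{1+a}$, whence the bound becomes $a+(1-a^{2})\frac{1}{1+a}=a+(1-a)=1$. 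Thus for $\rho\le\frac{1}{2+a}$ the series is $\le1$, and for $\rho<\frac{1}{2+a}$ it is strictly $<1$, since $a<1$ because $f(0)\in\Omega$.

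The one delicate point, and the step I expect to be the main obstacle, is the strict inequality on the boundary sphere $\rho=\frac{1}{2+a}$, where the geometric estimate only yields $\le1$; this case is in fact not needed for Theorem \ref{HBohrThm-BSD}, where $\rho\le\frac13<\frac{1}{2+a}$ places one safely in the interior. To secure $<1$ on the boundary I would argue by rigidity: equality in the displayed bound forces equality in Lemma \ref{L5} for every $k\ge1$ at once. For $k=1$ this reads $\left\Vert \operatorname{d}\phi(u)\cdot f_{1}(z)\right\Vert _{\Omega}=\rho$; since $\operatorname{d}\phi(u)\cdot f_{1}=\operatorname{d}(\phi\circ f)(0)$ is the differential at the fixed point $0$ of the Schwarz map $\phi\circ f:\Omega\rightarrow\Omega$, this attains the Schwarz bound, and its equality case forces $\phi\circ f$ to be a linear automorphism of $\Omega$. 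Then $f=\phi^{-1}\circ(\phi\circ f)$ with $\phi^{-1}$ a genuinely nonlinear automorphism when $u\ne0$, and the resulting higher homogeneous parts $f_{k}$ $(k\ge2)$ are strictly sub-extremal in Lemma \ref{L5}; the one-dimensional model $f=\phi^{-1}\circ(\text{unitary})$ has $\left\vert a_{k}\right\vert =\lambda^{k-1}(1-\lambda^{2})$, which decays below the bound $1-\lambda^{2}$ for $k\ge2$. Hence the inequalities cannot all be tight unless $f$ is constant, in which case the series equals $a<1$, and the strict inequality follows.
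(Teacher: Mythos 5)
Your main computation is exactly the paper's: the $k=0$ term is evaluated as $\left\Vert u\right\Vert _{\Omega}$ via (\ref{eq2-0E}) and (\ref{eq2-03}), the terms with $k\geq1$ are bounded by $(1-\left\Vert u\right\Vert _{\Omega}^{2})\left\Vert z\right\Vert _{\Omega}^{k}$ via Lemma \ref{L5} and (\ref{eq2-03}), and summing the geometric series gives $a+(1-a^{2})\frac{\rho}{1-\rho}$, which is $<1$ precisely when $\rho<\frac{1}{2+a}$. Up to that point you reproduce the paper's proof, and you are right to single out the endpoint $\rho=\frac{1}{2+a}$ as the only delicate case: the paper's own proof in fact stops at the observation that the right-hand side is less than $1$ if and only if $r<\frac{1}{2+\left\Vert u\right\Vert _{\Omega}}$, so it too establishes the strict inequality only on the open ball; the closed-ball version is never needed downstream, since in the proof of Theorem \ref{HBohrThm-BSD} the sphere $\left\Vert z\right\Vert _{\Omega}=\frac{1}{3}$ is handled by the same estimate with $\frac{r}{1-r}=\frac{1}{2}<\frac{1}{1+a}$.

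Your proposed rigidity argument for the endpoint, however, does not work as stated. The step asserting that $\left\Vert \operatorname{d}(\phi\circ f)(0)\cdot z\right\Vert _{\Omega}=\left\Vert z\right\Vert _{\Omega}$ forces $\phi\circ f$ to be a linear automorphism is false once the rank exceeds $1$: equality in the Schwarz bound at a \emph{single} vector carries no such rigidity. For the bidisc, $g(z_{1},z_{2})=(z_{1},z_{2}^{2})$ fixes $0$, is not linear, and satisfies $\left\Vert \operatorname{d}g(0)\cdot z\right\Vert _{\Omega}=\left\Vert z\right\Vert _{\Omega}$ for every $z$ with $|z_{1}|\geq|z_{2}|$. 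Likewise, your claim that the higher homogeneous parts of $\phi^{-1}$ composed with a linear automorphism are strictly sub-extremal in Lemma \ref{L5} is only verified on a one-dimensional model; in higher rank $\phi^{-1}$ acts with different scalar factors along different eigendirections of $u$, and the needed inequality is not immediate. So the endpoint case is not actually secured by your argument (nor by the paper's); closing it would require a genuinely different input, for instance an $\ell^{2}$-type estimate showing that the equalities $\left\Vert \operatorname{d}\phi(u)\cdot f_{k}(z)\right\Vert _{\Omega}=\left\Vert z\right\Vert _{\Omega}^{k}$ cannot hold for all $k\geq1$ simultaneously (in dimension one this follows from $\sum_{k}|a_{k}|^{2}\leq1$). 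None of this affects the application to Theorem \ref{HBohrThm-BSD}.
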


\begin{proof}
The proof goes along the same lines as in \cite{LiuWang2007}. For $z\in V$ we
have by Lemma \ref{L5}%
\[
\sum_{k=0}^{\infty}\frac{\left\Vert \operatorname{d}\phi(f(0))\cdot
f_{k}(z)\right\Vert _{\Omega}}{\left\Vert \operatorname{d}\phi
(f(0))\right\Vert _{\Omega}}\leq\left\Vert u\right\Vert _{\Omega}+\frac
{1}{\left\Vert \operatorname{d}\phi(u)\right\Vert _{\Omega}}\sum_{k=1}%
^{\infty}\left\Vert z\right\Vert _{\Omega}^{k}.
\]
Using $\left\Vert \operatorname{d}\phi(u)\right\Vert _{\Omega}=\frac
{1}{1-\left\Vert u\right\Vert _{\Omega}^{2}}$ from Proposition \ref{L3}, we
get for $\left\Vert z\right\Vert _{\Omega}\leq r<1$%
\[
\sum_{k=0}^{\infty}\frac{\left\Vert \operatorname{d}\phi(f(0))\cdot
f_{k}(z)\right\Vert _{\Omega}}{\left\Vert \operatorname{d}\phi
(f(0))\right\Vert _{\Omega}}\leq\left\Vert u\right\Vert _{\Omega}+\left(
1-\left\Vert u\right\Vert _{\Omega}^{2}\right)  \frac{r}{1-r}.
\]
For fixed $u$, the right hand side is less than $1$ if and only if $r<\frac
{1}{2+\left\Vert u\right\Vert _{\Omega}}$.
\end{proof}

\begin{proposition}
\label{P2}Let $\Omega$ be a bounded circled symmetric domain. Let $u\in\Omega$
and let $\phi\in\operatorname{Aut}\Omega$ such that $\phi(u)=0$. Assume
$\left\Vert u\right\Vert _{\Omega}>\frac{1}{3}$ and $\frac{1}{1+2\left\Vert
u\right\Vert _{\Omega}}<a<1$.

Then there exist a holomorphic map $f:\Omega\rightarrow\Omega$ with $f(0)=u$
and $z\in\Omega$ with $\left\Vert z\right\Vert _{\Omega}=a$, such that
\[
\sum_{k=0}^{\infty}\frac{\left\Vert \operatorname{d}\phi(u)\cdot
f_{k}(z)\right\Vert _{\Omega}}{\left\Vert \operatorname{d}\phi(u)\right\Vert
_{\Omega}}>1.
\]

\end{proposition}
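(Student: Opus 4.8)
The plan is to transport H.~Bohr's one-dimensional Möbius example $\psi(\zeta)=\frac{\alpha-\zeta}{1-\alpha\zeta}$ into $\Omega$ along the eigendirection of $\operatorname{d}\phi(u)$ that realizes its operator norm, so that the sum in question collapses to the one-dimensional Bohr sum already computed in Section~\ref{SEC1}. First I would note that the quantity is independent of the chosen $\phi$ with $\phi(u)=0$: any two such automorphisms differ by a linear $k\in\operatorname{Aut}\Omega$, which preserves $\|\cdot\|_\Omega$, so neither the numerators $\|\operatorname{d}\phi(u)\cdot f_k(z)\|_\Omega$ nor the denominator changes (this is exactly the reduction in the proof of Proposition~\ref{L3}). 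Hence I may take $\phi=\phi_u$ and $L:=\operatorname{d}\phi(u)=B(u,u)^{-1/2}$ by Lemma~\ref{L1}. Fix a spectral decomposition $u=\lambda_1e_1+\cdots+\lambda_re_r$ with $\alpha:=\|u\|_\Omega=\lambda_1$. By \eqref{eq2-0F} with $t=-1/2$, the projector $p_{11}$ onto $\mathbb{C}e_1$ enters $B(u,u)^{-1/2}$ with coefficient $(1-\lambda_1^2)^{-1}$, so $Le_1=(1-\alpha^2)^{-1}e_1=\|L\|_\Omega\,e_1$ by \eqref{eq2-04}; thus $e_1$ is a unit eigenvector of $L$ attaining $\|L\|_\Omega$.

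Next I would build the competitor. By Hahn--Banach choose a $\mathbb{C}$-linear functional $\pi$ on $V$ with $\pi(e_1)=1$ and $|\pi(w)|\le\|w\|_\Omega$ for all $w$, so that $\pi(\Omega)\subseteq\Delta$. Put $p:=u-\alpha e_1=\lambda_2e_2+\cdots+\lambda_re_r$ and define $f(w)=p+\psi(\pi(w))\,e_1$. Since $e_1,\dots,e_r$ is a frame, the spectral norm of $\zeta e_1+\lambda_2e_2+\cdots+\lambda_re_r$ equals $\max(|\zeta|,\lambda_2)<1$ for $\zeta\in\Delta$, and $\psi(\pi(w))\in\Delta$, so $f:\Omega\to\Omega$; moreover $f(0)=p+\psi(0)e_1=p+\alpha e_1=u$. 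Writing $\psi(\zeta)=\sum_k b_k\zeta^k$ (so $b_0=\alpha$ and $b_k=\alpha^{k+1}-\alpha^{k-1}<0$ for $k\ge1$, as in Section~\ref{SEC1}) and using linearity of $\pi$, the homogeneous expansion is $f_0=u$ and $f_k(w)=b_k\,\pi(w)^k\,e_1$ for $k\ge1$; crucially every $f_k$ with $k\ge1$ points along the maximizing direction $e_1$.

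Finally I would evaluate at $z:=a\,e_1$, which satisfies $\|z\|_\Omega=a$ and $\pi(z)=a$. The $k=0$ term gives $\|Lf_0(z)\|_\Omega/\|L\|_\Omega=\|Lu\|_\Omega/\|L\|_\Omega=\alpha$ by \eqref{eq2-0E} and \eqref{eq2-03}, while for $k\ge1$ one has $Lf_k(z)=b_ka^k\,Le_1=b_ka^k\|L\|_\Omega\,e_1$, so $\|Lf_k(z)\|_\Omega/\|L\|_\Omega=|b_k|a^k$. Summing yields exactly the one-dimensional Bohr sum $\sum_{k\ge0}|b_k|a^k=\mathfrak{M}_\psi(a)=2\alpha-\psi(a)$. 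By the elementary computation preceding the proof of Theorem~\ref{HBohrTheoremDim1}, this is $>1$ precisely when $a>\frac{1}{1+2\alpha}=\frac{1}{1+2\|u\|_\Omega}$, which holds by hypothesis; this gives the desired strict inequality, with the given $f$ and $z$.

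The step I expect to be the main obstacle is arranging $f(0)=u$ while forcing the higher homogeneous parts to align with $e_1$ rather than with $u/\|u\|_\Omega$. The naive linear disc $\zeta\mapsto\zeta\,(u/\alpha)$ through $0$ and $u$ would make each $f_k$ proportional to $u$, and since $\|L(u/\alpha)\|_\Omega=(1-\alpha^2)^{-1/2}$ only, it would introduce a spurious factor $(1-\alpha^2)^{1/2}<1$ that destroys the bound just above the threshold $\frac{1}{1+2\alpha}$. The affine shift $p=u-\alpha e_1$ is exactly what decouples the constant term from the $e_1$-varying part so that the sum reproduces the sharp one-dimensional value; the only point requiring care is that the shifted disc $\{p+\zeta e_1:\zeta\in\Delta\}$ still lies in $\Omega$, which is the max-of-coordinates computation on the polydisc spanned by the frame.
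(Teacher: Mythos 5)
Your proof is correct and follows the same overall strategy as the paper's: reduce to $\phi=\phi_u$, use \eqref{eq2-0F} to see that $e_1$ is an eigenvector of $B(u,u)^{-1/2}$ realizing the operator norm $(1-\|u\|_\Omega^2)^{-1}$ from \eqref{eq2-04}, transport the one-dimensional extremal M\"obius function into $\Omega$ along $e_1$, and evaluate at $z_0=ae_1$ so that the series collapses to the sharp one-dimensional Bohr sum. The only genuine difference is the construction of the competitor map: the paper takes $f(z)=\sum_{i=1}^r\frac{\lambda_i-(z\mid e_i)}{1-\lambda_i(z\mid e_i)}\,e_i$, M\"obius-transforming every spectral coordinate through the Hermitian pairing, whereas you freeze coordinates $2,\dots,r$ at the constant $p=u-\alpha e_1$ and replace $(\,\cdot\mid e_1)$ by a Hahn--Banach functional $\pi$ normalized against the spectral norm. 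Your variant buys a little robustness: the bound $|\pi(w)|\le\|w\|_\Omega$ is automatic from the choice of $\pi$, whereas the paper's claim that $|(z\mid e_i)|<1$ on $\Omega$ presupposes that the pairing is suitably normalized (with $(x\mid y)=\operatorname{tr}D(x,y)$ as in the appendix one has $(e_1\mid e_1)\ne1$ in general), and the verification $f(\Omega)\subseteq\Omega$ reduces to the elementary max-of-coefficients computation on the frame, which you carry out correctly. Both constructions yield the same value $\alpha+(1-\alpha^2)\frac{a}{1-a\alpha}$ and hence the same threshold $a>\frac{1}{1+2\alpha}$.
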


\begin{proof}
Consider $u\in\Omega$ with spectral decomposition
\begin{align*}
u  &  =\lambda_{1}e_{1}+\cdots+\lambda_{r}e_{r},\\
1  &  >\left\Vert u\right\Vert _{\Omega}=\lambda_{1}\geq\cdots\geq\lambda
_{r}\geq0.
\end{align*}
Define $f:\Omega\rightarrow\Omega$ by%
\[
f(z)=\sum_{i=1}^{r}\frac{\lambda_{i}-\left(  z\mid e_{i}\right)  }%
{1-\lambda_{i}\left(  z\mid e_{i}\right)  }e_{i}.
\]
This is a well defined holomorphic map, as $\left\vert \left(  z\mid
e_{i}\right)  \right\vert <1$ for all $z\in\Omega$, due to the convexity of
$\Omega$. From
\[
\frac{\lambda-\zeta}{1-\lambda\zeta}=\lambda+\sum_{k=1}^{\infty}\left(
\lambda^{k+1}-\lambda^{k-1}\right)  \zeta^{k},
\]
we get the Taylor expansion of $f$:%
\begin{align*}
f  &  =\sum_{k=0}^{\infty}f_{k},\\
u  &  =f(0)=f_{0}(z),\\
f_{k}(z)  &  =\sum_{i=1}^{r}\left(  \lambda_{i}^{k+1}-\lambda_{i}%
^{k-1}\right)  \left(  z\mid e_{i}\right)  ^{k}e_{i}\qquad(k\geq1).
\end{align*}
In particular, taking $z_{0}=ae_{1}$, we have%
\[
f_{k}(z_{0})=\left(  \lambda_{1}^{k+1}-\lambda_{1}^{k-1}\right)  a^{k}%
e_{1}\qquad(k\geq1).
\]
Take $\phi_{u}$ as in Lemma \ref{L1}. Then
\begin{align*}
\operatorname{d}\phi_{u}(u)  &  =B(u,u)^{-1/2},\\
\left\Vert \operatorname{d}\phi_{u}(u)\right\Vert _{\Omega}  &  =\frac
{1}{1-\left\Vert u\right\Vert _{\Omega}^{2}}.
\end{align*}
From (\ref{eq2-0F}) we get%
\begin{align*}
B(u,u)^{-1/2}e_{1}  &  =(1-\left\Vert u\right\Vert _{\Omega}^{2})^{-1}e_{1},\\
\left\Vert \operatorname{d}\phi_{u}(u)\cdot u\right\Vert  &  =(1-\left\Vert
u\right\Vert _{\Omega}^{2})^{-1}\left\Vert u\right\Vert _{\Omega},\\
\operatorname{d}\phi_{u}(u)\cdot f_{k}(z_{0})  &  =-a^{k}\lambda_{1}%
^{k-1}e_{1},\\
\left\Vert \operatorname{d}\phi_{u}(u)\cdot f_{k}(z_{0})\right\Vert _{\Omega}
&  =a^{k}\lambda_{1}^{k-1}=a^{k}\left\Vert u\right\Vert _{\Omega}^{k-1}.
\end{align*}
Hence%
\begin{align*}
\sum_{k=0}^{\infty}\frac{\left\Vert \operatorname{d}\phi(u)\cdot f_{k}%
(z_{0})\right\Vert _{\Omega}}{\left\Vert \operatorname{d}\phi(u)\right\Vert
_{\Omega}}  &  =\left\Vert u\right\Vert _{\Omega}+(1-\left\Vert u\right\Vert
_{\Omega}^{2})\sum_{k=1}^{\infty}a^{k}\left\Vert u\right\Vert _{\Omega}%
^{k-1}\\
&  =\left\Vert u\right\Vert _{\Omega}+(1-\left\Vert u\right\Vert _{\Omega}%
^{2})\frac{a}{1-a\left\Vert u\right\Vert _{\Omega}}.
\end{align*}
An elementary computation shows that this is greater than $1$ if and only if
\[
a>\frac{1}{1+2\left\Vert u\right\Vert _{\Omega}}.
\]

\end{proof}

\begin{proof}
[Proof of Theorem \ref{HBohrThm-BSD}]Proposition \ref{P1} shows that
(\ref{eq2-10}) is satisfied for all maps $f:\Omega\rightarrow\Omega$ and all
$z$ such that $\left\Vert z\right\Vert _{\Omega}<\frac{1}{3}$. Moreover, for
$\left\Vert z\right\Vert _{\Omega}=\frac{1}{3}$,we have%
\begin{align*}
\sum_{k=0}^{\infty}\frac{\left\Vert \operatorname{d}\phi(f(0))\cdot
f_{k}(z)\right\Vert _{\Omega}}{\left\Vert \operatorname{d}\phi
(f(0))\right\Vert _{\Omega}}  &  \leq\left\Vert f(0)\right\Vert _{\Omega
}+\left(  1-\left\Vert f(0)\right\Vert _{\Omega}^{2}\right)  \sum
_{k=1}^{\infty}\frac{1}{3^{k}}\\
&  =\left\Vert f(0)\right\Vert _{\Omega}+\frac{1-\left\Vert f(0)\right\Vert
_{\Omega}^{2}}{2}=1-\frac{\left(  1-\left\Vert f(0)\right\Vert _{\Omega
}\right)  ^{2}}{2}<1.
\end{align*}

As $\frac{1}{1+2\left\Vert u\right\Vert _{\Omega}}\rightarrow\frac{1}{3}$ as
$\left\Vert u\right\Vert _{\Omega}\rightarrow1-0$, proposition \ref{P2}
implies that $\frac{1}{3}$ is the optimal bound.
\end{proof}

\subsection{Open questions}

\begin{problem}
Let $f:\Omega\rightarrow\Omega$ be a holomorphic map.

\begin{enumerate}
\item With the assumption $f(0)=0$, Theorem \ref{HBohrThm-BSD} gives
\begin{equation}
\sum_{k=0}^{\infty}\left\Vert f_{k}(z)\right\Vert _{\Omega}<1\label{eq2-08}%
\end{equation}
for all $z$ such that $\left\Vert z\right\Vert _{\Omega}<\frac{1}{2}$. Is the
optimal bound equal to $\frac{1}{\sqrt{2}}$, as proved by E.\ Bombieri
\cite{Bombieri1962} in the one dimensional case?

\item What is the same optimal bound for all maps $f$ satisfying $f(0)=u$,
with $u\in\Omega$ fixed? Propositions \ref{P1} and \ref{P2} show that this
optimal bound belongs to $\left[  \frac{1}{2+\left\Vert u\right\Vert _{\Omega
}},\frac{1}{1+2\left\Vert u\right\Vert _{\Omega}}\right]  $. In the one
dimensional case, this is Ricci's estimate (see \cite{Ricci1955}). But
Bombieri's results for the one dimensional case show that this estimate may be
sharpened: in particular, the optimal bound is $\frac{1}{\sqrt{2}}$ when $u=0$,
$\frac{1}{1+2\left\Vert u\right\Vert _{\Omega}}$ when $\frac{1}{2}<\left\Vert
u\right\Vert _{\Omega}<1$.
\end{enumerate}
\end{problem}

For $\Omega$ a bounded circled symmetric domain, a natural problem
generalizing H.\ Bohr's problem would be the following.

\begin{problem}
Let $\Omega\subset V$ be a bounded circled symmetric domain of rank $r$. Let
$f:\Omega\rightarrow\Omega$ be a holomorphic map and consider the
\emph{Schmidt decomposition} of its Taylor expansion%
\[
f(z)=\sum_{k_{1}\geq\ldots k_{r}\geq0}f_{k_{1}\cdots k_{r}}(z)
\]
(where the $f_{k_{1}\cdots k_{r}}$'s are polynomials in the irreducible
$K$-modules for the linear subgroup $K$ of $\operatorname{Aut}\Omega$).
Determine the best constant $C$ such that $z\in C\Omega$ ensures
\[
\sum_{k_{1}\geq\ldots k_{r}\geq0}\left\Vert f_{k_{1}\cdots k_{r}%
}(z)\right\Vert _{\Omega}<1
\]
for all holomorphic maps $f:\Omega\rightarrow\Omega$.
\end{problem}

{

\appendix

\section*{Appendix}%

We recall here some notations and results about complex bounded symmetric
domains and their associated Jordan triple structure (see \cite{Loos1977},
\cite{Roos2000}).

\subsection*{Bounded symmetric domains and Jordan triples}

Let $\Omega$ be an irreducible bounded circled homogeneous domain in a complex
vector space $V$. This circled realization is unique up to a linear
isomorphism. Let $K$ be the identity component of the (compact) Lie group of
(linear) automorphisms of $\Omega$ leaving $0$ fixed. Let $\omega$ be a volume
form on $V$, invariant by $K$ and by translations. Let $\mathcal{K}$ be the
Bergman kernel of $\Omega$ with respect to $\omega$, that is, the reproducing
kernel of the Hilbert space $H^{2}(\Omega,\omega)=\mathrm{Hol}(\Omega)\cap
L^{2}(\Omega,\omega)$. The Bergman metric at $z\in\Omega$ is defined by
\[
h_{z}(u,v)=\partial_{u}\overline{\partial}_{v}\log\mathcal{K}(z).
\]
The \emph{Jordan triple product} on $V$ is characterized by
\[
h_{0}(\{uvw\},t)=\partial_{u}\overline{\partial}_{v}\partial_{w}%
\overline{\partial}_{t}\log\mathcal{K}(z)\left\vert _{z=0}\right.  .
\]
The triple product $(x,y,z)\mapsto\{xyz\}$ is complex bilinear and symmetric
with respect to $(x,z)$, complex antilinear with respect to $y$. It satisfies
the\emph{\ Jordan identity}
\begin{equation}
\{xy\{uvw\}\}-\{uv\{xyw\}\}=\{\{xyu\}vw\}-\{u\{vxy\}w\}.\tag{J}%
\end{equation}
The space $V$ endowed with the triple product $\{xyz\}$ is called a
\emph{(Hermitian) Jordan triple system}. For $x,y,z\in V$, denote by $D(x,y)$
and $Q(x,z)$ the operators defined by
\begin{equation}
\{xyz\}=D(x,y)z=Q(x,z)y.\tag{A1}%
\end{equation}
The Bergman metric at $0$ is related to $D$ by
\begin{equation}
h_{0}(u,v)=\operatorname*{tr}D(u,v).\tag{A2}%
\end{equation}
A Jordan triple system is called \emph{Hermitian positive }if
$(u|v)=\operatorname*{tr}D(u,v)$ is positive definite. As the Bergman metric
of a bounded domain is always definite positive, the Jordan triple system
associated to a bounded symmetric domain is Hermitian positive.

The \emph{quadratic representation }
\[
Q:V\longrightarrow\operatorname*{End}{}_{\mathbb{R}}(V)
\]
is defined by $Q(x)y=\frac{1}{2}\{xyx\}$. The \emph{Bergman operator} $B$ is
defined by
\begin{equation}
B(x,y)=I-D(x,y)+Q(x)Q(y),\tag{A3}%
\end{equation}
where $I$ denotes the identity operator in $V$. The quadratic representation
and the Bergman operator satisfy to many identities, the most important of
which are%
\begin{align}
&  Q(Q(x)y)=Q(x)Q(y)Q(x),\tag{A4}\\
&  Q(B(x,y)z)=B(x,y)Q(z)B(y,x)\tag{A5}%
\end{align}
(see \cite{Loos1975}, \cite{Roos2000}).

The Bergman operator gets its name from the following property:
\[
h_{z}\left(  B(z,z)u,v\right)  =h_{0}(u,v)\quad(z\in\Omega;\ u,v\in V).
\]
The Bergman kernel of $\Omega$ is then given by
\[
\mathcal{K}(z)=\frac{1}{\operatorname*{vol}\Omega}\frac{1}{\det B(z,z)}.
\]

\subsection*{Spectral theory}

Let $V$ be an Hermitian positive Jordan triple system. An element $c\in V$ is
called \emph{tripotent} if $c\neq0$ and $\{ccc\}=2c$.

Two tripotents $c_{1}$ and $c_{2}$ are called \emph{orthogonal} if
$D(c_{1},c_{2})=0$. If $c_{1}$ and $c_{2}$ are orthogonal tripotents, then
$D(c_{1},c_{1)}$ and $D(c_{2},c_{2})$ commute and $c_{1}+c_{2}$ is also a tripotent.

A tripotent $c$ is called \emph{primitive} (or \emph{minimal}) if it is not
the sum of two orthogonal tripotents. A tripotent $c$ is \emph{maximal} if
there is no tripotent orthogonal to $c$.

A \emph{frame} of $V$ is a maximal sequence $(c_{1},\ldots,c_{r})$ of pairwise
orthogonal primitive tripotents. Then there exist frames for $V$. All frames
have the same number of elements, which is the \emph{rank} $r$ of $V$. The
frames of $V$ form a manifold $\mathcal{F}$, which is called the
\emph{Satake-Furstenberg boundary} of $\Omega$.

Let $V$ be a simple Hermitian positive Jordan triple system. Then any $x\in V$
can be written in a unique way
\begin{equation}
x=\lambda_{1}c_{1}+\lambda_{2}c_{2}+\cdots+\lambda_{p}c_{p}\text{,}\tag{A6}%
\end{equation}
where $\lambda_{1}>\lambda_{2}>\cdots>\lambda_{p}>0$ and $c_{1},c_{2}%
\ldots,c_{p}$ are pairwise orthogonal tripotents. The element $x$ is called
\emph{regular} iff $p=r$; then $(c_{1},c_{2},\ldots,c_{r})$ is a frame of $V$.
The decomposition (A6) is called the \emph{spectral decomposition} of $x$.

Let $\mathbf{c}=(c_{1},\ldots,c_{r})$ be a frame. For $0\leq i\leq j\leq r$,
let
\begin{equation}
V_{ij}(\mathbf{c})=\left\{  x\in V\mid D(c_{k},c_{k})x=(\delta_{i}^{k}%
+\delta_{j}^{k})x,\;1\leq k\leq r\right\}  \text{.}\tag{A7}%
\end{equation}
The decomposition
\begin{equation}
V=\bigoplus_{0\leq i\leq j\leq r}V_{ij}(\mathbf{c})\tag{A8}%
\end{equation}
is orthogonal with respect to the Hermitian product (A2) and is called the
\emph{simultaneous Peirce decomposition} with respect to the frame
$\mathbf{c}$.

Let $(p_{ij})$ be the family of orthogonal projectors onto the subspaces of
the decomposition (A8). Then, for $x=\lambda_{1}c_{1}+\lambda_{2}c_{2}%
+\cdots+\lambda_{r}c_{r}$, $\lambda_{i}\in\mathbb{R}$ ($1\leq i\leq r$) and
$\lambda_{0}=0$,%
\begin{align}
D(x,x)  & =\sum_{0\leq i\leq j\leq r}\left(  \lambda_{i}^{2}+\lambda_{j}%
^{2}\right)  p_{ij},\tag{A9}\\
Q(x)^{2}  & =\sum_{0\leq i\leq j\leq r}\lambda_{i}^{2}\lambda_{j}^{2}%
p_{ij},\tag{A10}\\
B(x,x)  & =\sum_{0\leq i\leq j\leq r}\left(  1-\lambda_{i}^{2}\right)  \left(
1-\lambda_{j}^{2}\right)  p_{ij}.\tag{A11}%
\end{align}
(See \cite[Corollary 3.15]{Loos1977}).

The map $x\mapsto\lambda_{1}$, where $x=\lambda_{1}c_{1}+\lambda_{2}%
c_{2}+\cdots+\lambda_{p}c_{p}$ is the spectral decomposition of $x$
($\lambda_{1}>\lambda_{2}>\cdots>\lambda_{p}>0$) is a norm on $V$, called the
\emph{spectral norm}. We will denote the spectral norm by $\left\Vert
x\right\Vert _{\Omega}$. It satisfies%
\begin{equation}
\left\Vert x\right\Vert _{\Omega}^{2}=\left\Vert Q(x)\right\Vert =\frac{1}%
{2}\left\Vert D(x,x)\right\Vert ,\tag{A12}%
\end{equation}
where $\left\Vert u\right\Vert $ denotes the operator norm of an $\mathbb{R}%
$-linear operator $u\in\operatorname*{End}_{\mathbb{R}}V$ with respect to the
Hermitian norm $\left\Vert x\right\Vert ^{2}=\operatorname{tr}D(x,x)$.

If $V$ has the Jordan triple structure associated to the bounded symmetric
domain $\Omega\subset V$, then $\Omega$ is the unit ball of $V$ for the
spectral norm. Conversely, if $V$ is a positive Hermitian Jordan triple, the
unit ball of the spectral norm is a bounded circled symmetric domain, whose
associated Jordan triple is $V$.

}


\begin{thebibliography}{99}                                        

\bibitem[Aizenberg 2000]{Aizenberg2000}Aizenberg, Lev. Multidimensional
analogues of Bohr's theorem on power series.
\textit{Proc.\ Amer.\ Math.\ Soc.\ }\textbf{128}:4 (2000), 1147--1155.
MR1636918 (2000i:32001). Zbl~0948.32001.

\bibitem[Aizenberg et al.2000]{AizenbergAytunaDjakov2000}
Aizenberg, Lev; Aytuna, Aydin; Djakov, Plamen. An abstract approach to Bohr's phenomenon.
\textit{Proc.\ Amer.\ Math.\ Soc.}\ \textbf{128}:9 (2000), 2611--2619.
MR1657738 (2000m:32002). Zbl 0958.46015.

\bibitem[Boas 2000]{Boas2000}Boas, Harold P. Majorant series.
\textit{J.\ Korean Math.\ Soc.\ }\textbf{37}:2 (2000), 321--337. MR1775963
(2001j:32001). Zbl 0965.32001.

\bibitem[Boas--Khavinson 1997]{BoasKhavinson1997}Boas H.P.; Khavinson D.
Bohr's power series theorem in several variables.
\textit{Proc.\ Amer.\ Math.\ Soc.\ }\textbf{125}:10 (1997), 2975--2979. Zbl 0888.32001.

\bibitem[Boas--Khavinson 2000]{BoasKhavinson2000}Boas, Harold P.; Khavinson,
Dmitry. Vita: Friedrich Wilhelm Wiener. \textit{Math.\ Intell.\ }\textbf{22}:2
(2000), 73--75. MR1764270 (2001g:01037). Zbl 1052.01516.

\bibitem[Bohr 1914]{Bohr1914}Bohr, Harald. A theorem concerning power series,
\textit{Proc.\ London Math.\ Soc.\ (2)} \textbf{13} (1914), 1--5. JFM 44.0289.01.

\bibitem[Bombieri 1962]{Bombieri1962}Bombieri, Enrico. Sopra un teorema di H.
Bohr e G. Ricci sulle funzioni maggioranti delle serie di potenze.
\textit{Boll.\ Unione Mat.\ Ital.}, III.\ Ser.\ \textbf{17} (1962), 276--282.
MR0162918 (29 \#222). Zbl 0109.04801.

\bibitem[Bombieri--Bourgain 2004]{BombieriBourgain2004}
Bombieri, E.; Bourgain, J. A remark on Bohr's inequality.
\textit{Int.\ Math.\ Res.\ Not.}\ \textbf{80} (2004), 4307--4330.
MR2126627 (2006a:30003). Zbl 1069.30001.

\bibitem[Defant et al. 2003]{DefantGarciaMaestre2003}Defant, Andreas;
Garc\'{\i a}, Domingo; Maestre, Manuel. Bohr's power series theorem and local
Banach space theory. \textit{J.\ Reine Angew.\ Math.}\ \textbf{557} (2003),
173--197. MR1978407 (2004d:46012). Zbl~1031.46014.

\bibitem[Dineen--Timoney 1989]{DineenTimoney1989}Dineen, Se\'{a}n; Timoney,
Richard M. Absolute bases, tensor products and a theorem of Bohr.
\textit{Studia Math.}, \textbf{94}:3 (1989), 227--234. MR1019790 (91e:46006). Zbl~0713.46005.

\bibitem[Dineen--Timoney 1991]{DineenTimoney1991}Dineen, Se\'{a}n; Timoney,
Richard M. On a problem of H.\ Bohr.
\textit{Bull.\ Soc.\ Roy.\ Sci.\ Li\`{e}ge} \textbf{68}:6 (1991), 401--404.
MR1162791 (93e:46050). Zbl 0795.46034.

\bibitem[Djakov--Ramanujan 2000]{DjakovRamanujan2000}
Djakov P.B.; Ramanujan M.S. A remark on Bohr's theorem and its generalizations.
\textit{J.\ Anal.}\ \textbf{8} (2000), 65--77.
MR1806396 (2001k:30001). Zbl 0969.30001.

\bibitem[Fej\'{e}r 1914]{Fejer1914}Fej\'{e}r, L. \"{U}ber gewisse durch die
Fourier'sche und Laplace'sche Reihe definierten Mittelkurven und
Mittelfl\"{a}chen. \textit{Palermo Rend. (1)} \textbf{38} (1914), 79-97. JFM 45.0412.03.

\bibitem[Fej\'{e}r 1925]{Fejer1925}Fej\'{e}r L. \"{U}ber die Positivit\"{a}t
von Summen, die nach trigonometrischen oder Legendreschen Funktionen
fortschreiten. I. \textit{Acta Szeged }\textbf{2} (1925), 75--86. JFM 51.0219.01.

\bibitem[Hua 1963]{Hua1963}Hua, L.K. (Hua, Luo-Keng). \textit{Harmonic
analysis of functions of several complex variables in classical regions.} 
Translated from the Russian by Leo Ebner and Adam Kor\'{a}nyi. iv+168 p.
Translations of Mathematical Monographs, \textbf{6}. American Mathematical
Society, Providence, RI, 1963. MR0171936 (30 \#2162). Zbl 0112.07402.

\bibitem[Hua 1979]{Hua1979}Hua, L.K. (Hua, Luo-Keng). \textit{Harmonic
analysis of functions of several complex variables in the classical domains.}
Transl. from the Russian, which was a translation of the Chinese orig., by Leo
Ebner and Adam Koranyi. With a foreword by M.I.\ Graev. Reprint of the 1963
edition. iv+186 p. Translations of Mathematical Monographs, \textbf{6}.
American Mathematical Society, Providence, R.I. 1979. MR0598469 (82c:32032).
Zbl 0507.32025.

\bibitem[Kahane 1980]{Kahane1980}Kahane, Jean-Pierre.
Sur les polyn\^omes \`a coefficients unimodulaires.
\textit{Bull.\ London Math.\ Soc.}\ \textbf{12}:5 (1980), 321--342.
MR0587702 (82a:30003). Zbl 0443.30005.

\bibitem[Landau 1925]{Landau1925}Landau, Edmund. \"{U}ber einen Fej\'{e}rschen
Satz. \textit{Nachrichten G\"{o}ttingen} \textbf{1925}, 22 (1925). JFM 51.0219.02.

\bibitem[Landau 1929]{Landau1929}Landau, E. (Landau, Edmund Georg Hermann).
\textit{Darstellung und Begr\"{u}ndung einiger neuerer Ergebnisse der
Funktionentheorie}, 2. Aufl. 122S., 10 Fig. Julius Springer, Berlin, 1929. JFM 55.0171.03.

\bibitem[Liu--Wang 2007]{LiuWang2007}Liu, Taishun; Wang, Jianfei. An absolute
estimate of the homogeneous expansions of holomorphic mappings.
\textit{Pacific J.\ Math.}\ \textbf{231}:1 (2007), 155--166. MR2304626
(2008h:32001). Zbl pre05366236.

\bibitem[Loos 1975]{Loos1975}Loos, Ottmar. \textit{Jordan pairs,} xvi+218 p.
Lecture Notes in Mathematics, vol. \textbf{460}, Springer Verlag,
Berlin--Heidelberg--New~York, 1975. MR0444721 (56 \#3071). Zbl 0301.17003

\bibitem[Loos 1977]{Loos1977}Loos, Ottmar. \textit{Bounded symmetric domains
and Jordan pairs}, Math.\ Lectures, Univ.\ of California, Irvine, CA, 1977.

\bibitem[Paulsen et al. 2002]{PaulsenPopescuSingh2002}Paulsen V.I.; Popescu G.; Singh D.
On Bohr's inequality. \textit{Proc.\ London Math.\ Soc.}\ (3) \textbf{85}:2 (2002), 493--512.
MR1912059 (2003h:47025). Zbl 1033.47008.

\bibitem[Ricci 1955]{Ricci1955}Ricci, Giovanni. Complementi a un teorema di H.
Bohr riguardante le serie di potenze. \textit{Rev.\ Un.\ Mat.\ Argentina}
\textbf{17}, 185--195 (1956). MR0081336 (18,385a). Zbl 0072.07301.

\bibitem[Roos 2000]{Roos2000}Roos, Guy. Jordan triple systems. \textit{Faraut,
Jacques et al., Analysis and geometry on complex homogeneous domains}, Prog.\
Math.\ \textbf{185}, Birkh\"{a}user, Boston, MA, 425--536 (2000). Zbl 1043.17017.

\bibitem[Sidon 1927]{Sidon1927}Sidon S., \"{U}ber einen Satz von Herrn Bohr,
\textit{Math.\ Z.}\ \textbf{26}:1 (1927), 731--732. MR1544888. JFM 53.0281.04.

\bibitem[Tomi\'{c} 1962]{Tomic1962}Tomi\'{c} M., Sur un th\'{e}or\`{e}me de
H.\ Bohr, \textit{Math.\ Scand.}\ \textbf{11} (1962), 103--106. MR0176040 (31
\#316). Zbl 0109.30202.
\end{thebibliography}
\end{document}